\title{Ancient Solutions to the Biharmonic Heat Equation}
\author{Alexander D. McWeeney}
\date{}
\newtheorem{theorem}{Theorem}[section]
\newtheorem*{theorem*}{Theorem}
\newtheorem{corollary}{Corollary}[section]
\newtheorem{lemma}[theorem]{Lemma}
\newtheorem{proposition}[theorem]{Proposition}
\theoremstyle{definition}
\theoremstyle{remark}
\DeclareMathOperator{\Ric}{\operatorname{Ric}}
\begin{document}
\DeclareNameAlias{author}{family-given}
\numberwithin{equation}{section}

\maketitle
\begin{abstract}
    We show that the space of polynomially bounded ancient solutions to the biharmonic heat equation on a complete manifold with polynomial volume growth is bounded by the dimensions of spaces of polynomially bounded biharmonic functions. This generalizes the work of Colding and Minicozzi in \cite{colding2021optimal} for ancient caloric functions.
\end{abstract}

\section{Introduction}
The relationship between the geometry of manifolds and the analytic properties of functions on manifolds is a defining theme of geometric analysis. Our direction starts with the Liouville theorems for harmonic functions on \(\mathbb{R}^n\) and Yau's generalization.

Yau proved in \cite{yau1975harmonic} that a bounded harmonic function on a complete manifold with nonnegative Ricci curvature is a constant. In 1974, he conjectured that a more general result should hold: on a complete manifold \(M\) with nonnegative Ricci curvature, the space \(\mathcal{H}_d(M)\) of harmonic functions with polynomially bounded growth should have finite dimension. Colding and Minicozzi proved his conjecture in \cite{colding1997harmonic}. 

A natural generalization is to try to show this result for solutions of the heat equation. However, the heat equation is very flexible compared to the Laplace equation, and since there are bounded solutions to the heat equation, no Liouville theorem is possible in general.

Despite this, if we restrict attention to specifically \textit{ancient} solutions of the heat equation, that is, solutions which are defined for all time going back to \(-\infty\), then Liouville theorems actually do become possible. Indeed, in \cite{colding2021optimal}, Colding and Minicozzi generalize \cite{colding1997harmonic} to show that the space \(\mathcal{P}_d(M)\) of ancient solutions of the heat equation with polynomially bounded growth also has finite dimension. In (\cite{colding2020complexity}, \cite{colding2019search}, \cite{colding2019liouville}), Colding and Minicozzi show how the spaces \(\mathcal{P}_d(M)\) are relevant to geometric flows.

Continuing to more types of equations, Wang and Zhu recently generalized the result of \cite{colding1997harmonic} to biharmonic functions \cite{wang2025qualitativebehaviorbiharmonicfunctions}, i.e., functions \(u: M \rightarrow \mathbb{R}\) solving
\begin{equation*}
    \Delta\Delta u = 0.
\end{equation*}

This equation is also more flexible than the Laplace equation (indeed, any harmonic function is biharmonic), and we cannot prove as general a Liouville theorem as for harmonic functions. To find a Liouville theorem, rather than restricting attention to a subclass of biharmonic functions as in \cite{colding2021optimal}, Wang and Zhu instead restrict attention to a subclass of manifolds with polynomial volume growth and Ricci curvature bounded below at infinity.

Our goal in this paper is to generalize Wang and Zhu's result to ancient solutions of the biharmonic heat equation, following the strategy of Colding and Minicozzi in \cite{colding2021optimal}. Our main result is
\begin{theorem*}
    Let \(M\) be a complete Riemannian manifold with polynomial volume growth and Ricci curvature bounded below quadratically. Let \(u: M \times (-\infty, 0] \rightarrow \mathbb{R}\) be an ancient solution of
    \begin{equation*}
        \partial_t u(x, t) + \Delta\Delta u(x, t) = 0
    \end{equation*}
    such that \(|u(x, t)|\) and \(|\nabla u(x, t)|\) have polynomially bounded growth in the heat balls \(B_R(x) \times [-R^4, 0]\). The space of all such solutions \(u(x, t)\) is finite dimensional.
\end{theorem*}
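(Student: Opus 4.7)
The plan is to adapt the abstract finite-dimensionality framework of Colding--Minicozzi \cite{colding2021optimal} to the 4th-order parabolic setting, using the finite-dimensionality of polynomially bounded biharmonic functions from Wang--Zhu \cite{wang2025qualitativebehaviorbiharmonicfunctions} as the ``elliptic'' input. Writing $\mathcal{P}_d(M)$ for the space of ancient solutions in the statement and $\mathcal{B}_{d'}(M)$ for polynomially bounded biharmonic functions, my goal is a bound $\dim V \leq C(d, n)\dim \mathcal{B}_{d'}(M)$, uniform in every finite-dimensional subspace $V \subseteq \mathcal{P}_d(M)$.

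The first ingredient I would establish is a 4th-order parabolic toolkit on heat balls $B_R(x) \times [-R^4, 0]$: a mean value / $L^{\infty}$--$L^2$ estimate
\[
\sup_{B_{R/2} \times [-R^4/2, 0]} |u|^2 \leq C\, R^{-(n+4)} \int_{B_R \times [-R^4, 0]} |u|^2,
\]
obtained via Moser-type iteration adapted to $\partial_t + \Delta^2$, together with Caccioppoli-type reverse estimates controlling $\int |\nabla u|^2$ and $\int |\Delta u|^2$ by $\int |u|^2$ on slightly larger concentric heat balls. The polynomial volume growth and Ricci lower bound furnish the Sobolev inequality on which iteration rests, while the hypothesis that $|\nabla u|$ is polynomially bounded enters the cutoff arguments: computing $\Delta^2(\phi^4 u)$ produces cross terms of the form $\nabla u \cdot \nabla \phi \cdot \Delta \phi$ that do not absorb as cleanly as in the second-order caloric case.

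With these estimates in hand, I would carry out the abstract dimension argument. The algebraic bridge is the identity $\partial_t^k u = (-1)^k \Delta^{2k} u$. Using the doubling property from the mean value inequality and the polynomial growth of $u$, one shows that for every $u \in V$ there is $N = N(d)$ with $\partial_t^{N+1} u \equiv 0$, and hence
\[
u(x, t) = \sum_{k=0}^{N} \frac{(-t)^k}{k!}\,\Delta^{2k} v_0(x), \qquad v_0(x) := u(x, 0),
\]
subject to the polyharmonic constraint $\Delta^{2(N+1)} v_0 = 0$. The assignment $u \mapsto v_0$ is thus an injection of $V$ into the space of polynomially bounded polyharmonic functions of order $N+1$. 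This space is finite-dimensional by induction on $N$: if $\Delta^{2(N+1)} v_0 = 0$, then $\Delta^2 v_0$ is polyharmonic of order $N$ with controlled polynomial growth, and $v_0$ is determined by $\Delta^2 v_0$ up to an element of $\mathcal{B}_{d'}(M)$, which is finite-dimensional by \cite{wang2025qualitativebehaviorbiharmonicfunctions}.

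The step I expect to be the main obstacle is the 4th-order mean value inequality. Without a maximum principle for $\partial_t + \Delta^2$, Moser iteration must be executed through integration by parts, and the Laplacian of a radial cutoff introduces first-order cross terms that are benign in the second-order caloric case but not here. Propagating the gradient bound on $|\nabla u|$ through each iteration step without losing more than polynomial factors in $R$ is the main technical work; once that bookkeeping is in place, the polynomial-in-$t$ reduction and the polyharmonic bound should follow the template of \cite{colding2021optimal} closely, with the elliptic input replaced by \cite{wang2025qualitativebehaviorbiharmonicfunctions}.
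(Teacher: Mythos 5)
Your algebraic skeleton matches the paper's: iterate an energy (reverse Poincar\'e) estimate on heat balls to force $\partial_t^{N+1}u\equiv 0$, write $u$ as a polynomial in $t$, and inject into spaces of biharmonic functions. But two steps in your plan are either unnecessary or unjustified, and they diverge from what the paper actually does. First, the Moser-type $L^\infty$--$L^2$ mean value inequality is not needed and is probably the wrong hill to fight on: the pointwise polynomial bounds on $|u|$ and $|\nabla u|$ are \emph{hypotheses} of the theorem, so the direction you need is $L^\infty\to L^2$ (via polynomial volume growth), not $L^2\to L^\infty$. The paper proves $\partial_t^{N+1}u\equiv 0$ purely from the iterated reverse Poincar\'e inequality (Lemma \ref{reversepoincare} and Lemma \ref{exponentbound}) with no Moser iteration at all, precisely because --- as you yourself flag --- fourth-order Moser is delicate without a maximum principle.

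Second, and this is the real gap: your induction over polyharmonic order needs the claim that $\Delta^2 v_0$ (and more generally $\Delta^{2j}v_0$) is \emph{pointwise} polynomially bounded, since Wang--Zhu's finite-dimensionality applies to polynomially bounded biharmonic functions. You assert ``controlled polynomial growth'' but give no mechanism to get it. The reverse Poincar\'e only gives $L^2$ control, and you would be thrown back onto the Moser iteration you cannot easily run. The paper sidesteps this entirely in Lemma \ref{coefficientbound} with a linear-algebra trick: choose $d+1$ times $t_i$, invert the Vandermonde system $u(x,R^4t_i)=\sum_j (R^4t_i)^j p_j(x)$ to express $R^{4j}p_j(x)$ as a fixed linear combination $\sum_i b_i^j\,u(x,R^4 t_i)$, and read off $|p_j(x)|\le C_j(1+|x|)^{4(k-j)}$ and $|\nabla p_j(x)|\le C'_j(1+|x|)^{4(\ell-j)}$ directly from the hypotheses on $u$ and $\nabla u$. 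That decreasing-in-$j$ exponent is exactly what makes the coefficient maps $\Psi_j:u\mapsto p_{d-j}$ land in the nested spaces $\mathcal{H}_{4(k-i),4(\ell-i)}(M)$, giving both finite-dimensionality and the sharp count. Without some replacement for this step, your injection of $V$ into the polyharmonic functions does not land in a space you know to be finite-dimensional, and the argument does not close.
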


\subsection{Definitions and Notation}
We now give more precise definitions and statements. Given a manifold \(M\) and an interval \(I \subset \mathbb{R}\), a function \(u: M \times I \rightarrow \mathbb{R}\) satisfies the biharmonic heat equation if
\begin{equation}\label{first instance of biharmonic heat equation}
    \partial_t u(x, t) + \Delta\Delta u(x, t) = 0.
\end{equation}
We will call such a function ``bicaloric'' for brevity. A bicaloric function \(u\) is ancient if it can be defined on an interval extending infinitely backwards in time, i.e. for \(t \in (-\infty, 0]\). We say that \(u \in \mathcal{P}_{d, d'}(M)\) for \(d, d' > 0\) if \(\partial_t u + \Delta\Delta u = 0\), \(u\) is ancient, and for some constants \(C, C' > 0\),
\begin{equation}\label{definition of polynomially bounded growth in introduction}
    \sup_{B_R(p) \times [-R^4, 0]}|u(x, t)| \le C(1 + R)^d, \quad \sup_{B_R(p) \times [-R^4, 0]}|\nabla u(x, t)| \le C'(1 + R)^{d'}
\end{equation}
for any \(p \in M\) and \(R > 0\). We similarly say that \(u \in \mathcal{H}_{d, d'}(M)\) if \(\Delta\Delta u = 0\) and the same bounds in (\ref{definition of polynomially bounded growth in introduction}) hold, where we take the supremum over only the ball \(B_R(p)\).

A manifold \(M\) is said to have polynomial volume growth if there are constants \(C, d_V > 0\) and some \(p \in M\) such that \(\operatorname{Vol}(B_R(p)) \le C(1 + R)^{d_V}\) for all \(R> 0\). Furthermore, we say that the Ricci curvature tensor is bounded below quadratically with constant \(K\) if for some \(p \in M\) and all \(R > 0\),
\begin{equation}
    \sup_{v \in TB_{R}(p)}\frac{\Ric(v, v)}{|v|^2} \ge -\frac{K}{R^2}.
\end{equation}

With these definitions, our main results are more precisely stated as
\begin{theorem}\label{dimension bound theorem}
    Let \(M\) be a complete Riemannian manifold with polynomial volume growth and Ricci curvature bounded below quadratically. Let \(k\), \(\ell\) be nonnegative integers. Then
    \begin{equation}\label{equation in the first statement of the main theorem}
        \dim \mathcal{P}_{4k, 4\ell}(M) \le \begin{cases}
             \displaystyle \sum_{i = 0}^{k} \dim \mathcal{H}_{4(k - i), 4(\ell - i)}(M) & \quad k \le \ell + 1, \\ \displaystyle
            1 + \sum_{i = 0}^{\ell} \dim \mathcal{H}_{4(k - i), 4(\ell - i)}(M) & \quad k > \ell + 1
        \end{cases}
    \end{equation}
    Moreover, these inequalities are sharp in \(\mathbb{R}^n\).
\end{theorem}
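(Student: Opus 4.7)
Following the Colding--Minicozzi strategy for ancient caloric functions \cite{colding2021optimal}, the heart of the argument is the observation that for ancient bicaloric $u$, the function $\partial_t u = -\Delta\Delta u$ is again ancient bicaloric, with spatial growth reduced by four degrees in both the $|u|$ and $|\nabla u|$ bounds. I would therefore study the linear map
\[
T : \mathcal{P}_{4k, 4\ell}(M) \longrightarrow \mathcal{P}_{4(k-1), 4(\ell-1)}(M), \qquad u \longmapsto \partial_t u,
\]
whose kernel is precisely the space of time-independent solutions of the bicaloric equation satisfying the growth bounds, namely $\ker T = \mathcal{H}_{4k, 4\ell}(M)$.

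The first technical step is to prove the derivative estimate that makes $T$ well-defined: if $u$ is bicaloric on $B_{2R}(p) \times [-R^4, 0]$ with $|u| \le A$ there, then $|\Delta\Delta u| \le C A / R^4$ on the smaller parabolic cylinder $B_R(p) \times [-R^4/2, 0]$, and an analogous estimate controlling $|\nabla \Delta\Delta u|$ by $|\nabla u|$. Because the equation is fourth order and has no maximum principle, I would derive this via an $L^2$ mean-value inequality together with a Caccioppoli/Moser iteration, invoking the Bochner--Weitzenb\"ock formula to absorb the curvature terms generated when commuting $\nabla$ and $\Delta$; the quadratic Ricci lower bound of the hypothesis is exactly the curvature input needed to keep these terms under control, and the polynomial volume growth ensures that cutoff integrations do not blow up.

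With $T$ in hand, rank-nullity gives the recursion
\[
\dim \mathcal{P}_{4k, 4\ell}(M) \le \dim \mathcal{H}_{4k, 4\ell}(M) + \dim \mathcal{P}_{4(k-1), 4(\ell-1)}(M),
\]
which I iterate until one of the two indices becomes zero, producing the telescoping sum on the right of~\eqref{equation in the first statement of the main theorem}. When $k \le \ell + 1$ the spatial index $4(k - i)$ hits zero first; the residual space $\mathcal{P}_{0, 4(\ell-k)}(M)$ consists of ancient bicaloric functions with $|u|$ bounded, and a Liouville-type argument (apply $T$ once more; the resulting bicaloric function has gradient decaying in $R$, hence is spatially constant, hence constant) forces such $u$ to be time-independent, so that this term contributes only $\dim \mathcal{H}_{0, 4(\ell-k)}(M)$. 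When $k > \ell + 1$ the gradient index $4(\ell - i)$ hits zero first; one further application of $T$ carries $u$ into a space whose elements have gradients decaying strictly in $R$, forcing $\partial_t^{\ell+1} u$ to be a genuine constant $c$, and unwinding gives a single one-parameter family (parametrized by $c$) not accounted for by the biharmonic terms --- this is the extra ``$+1$''. Sharpness in $\mathbb{R}^n$ follows by writing down the explicit ancient bicaloric polynomials $u(x, t) = \sum_{j \ge 0} (t^j / j!)(-\Delta\Delta)^j p(x)$ generated by biharmonic polynomials $p$ of suitable spatial degree: the sum terminates after finitely many terms and the count matches the stated bound.

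The principal obstacle is the derivative estimate in the second paragraph. With only a quadratic Ricci lower bound and polynomial volume growth --- no two-sided curvature bound, no Sobolev inequality, no maximum principle --- extracting the $R^{-4}$ decay of $\Delta\Delta u$ for bicaloric $u$ requires iterated Bochner identities in which the curvature contributions must be absorbed delicately through successive Caccioppoli iterations. A secondary but nontrivial point is carefully justifying the Liouville step that pins down the stopping behavior of the iteration and the appearance of the ``$+1$''.
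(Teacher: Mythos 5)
Your map $T\colon u\mapsto \partial_t u$ with $\ker T=\mathcal{H}_{4k,4\ell}(M)$ gives the right recursion, and the rank--nullity bookkeeping (including the ``$+1$'' when $k>\ell+1$) lines up with the theorem, but the route you propose for making $T$ well-defined is the wrong one and contains the proof's genuine gap. You want a pointwise derivative estimate $|\Delta\Delta u|\le CA/R^4$ (and the analogue for $\nabla\Delta\Delta u$) from Moser-type iteration. The paper explicitly rules this out: with no maximum principle, no Sobolev inequality, and only a quadratic Ricci lower bound, ``the usual pointwise derivative estimates one can find for harmonic functions on a ball cannot be found for a biharmonic function.'' You flag this obstacle yourself, but a proof has to resolve it, not acknowledge it, and in this generality the estimate is not expected to be available.

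The paper's proof succeeds precisely because it never needs such a pointwise estimate. It proves only an $L^2$ reverse Poincar\'e inequality on heat balls $Q_r=B_r\times[-r^4,0]$ (Lemma~\ref{reversepoincare}), then combines iteration of that inequality with the polynomial volume growth hypothesis to force $\partial_t^m u\equiv 0$ for some finite $m$ (Lemma~\ref{exponentbound}). This gives the algebraic structure $u=p_0(x)+tp_1(x)+\cdots+t^dp_d(x)$ with $\Delta\Delta p_d=0$ and $\Delta\Delta p_j=-(j+1)p_{j+1}$, and -- this is the key maneuver you are missing -- the pointwise growth bounds on the coefficients $p_j$ and $\nabla p_j$ are then extracted \emph{directly from the pointwise bounds on $u$ itself}, by evaluating $u$ at $d+1$ distinct times $t_1,\dots,t_{d+1}$ and inverting the resulting Vandermonde system (Lemma~\ref{coefficientbound}). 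The dimension count then proceeds by iterating the leading-coefficient map $\Psi_0\colon u\mapsto p_d$ on its successive kernels, rather than iterating $\partial_t$. Note that once Lemma~\ref{coefficientbound} is in hand one can even read off \emph{a posteriori} that $\partial_t u\in\mathcal{P}_{4(k-1),4(\ell-1)}(M)$ -- so your map $T$ is in fact well-defined -- but this conclusion is an output of the polynomial structure, not something you can establish first by local elliptic/parabolic regularity as your second paragraph suggests. Your sharpness sketch (bicaloric polynomials $\sum_j (t^j/j!)(-\Delta\Delta)^j p$ from biharmonic $p$) is essentially the paper's Lemma~\ref{dim for bicaloric polynomials} and is fine.
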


Combining this with Wang and Zhu's result \cite{wang2025qualitativebehaviorbiharmonicfunctions}, we have the following corollary:

\begin{corollary}
    Let \(M\) be a Riemannian manifold with polynomial volume growth and Ricci curvature bounded below quadratically. Then for \(k, \ell \ge 0\) the spaces \(\mathcal{P}_{4k, 4\ell}(M)\) are finite dimensional.
\end{corollary}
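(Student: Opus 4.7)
The plan is to combine Theorem~\ref{dimension bound theorem} with the Wang--Zhu finite-dimensionality result \cite{wang2025qualitativebehaviorbiharmonicfunctions} for polynomially bounded biharmonic functions. Under the hypotheses of the corollary, Theorem~\ref{dimension bound theorem} bounds $\dim\mathcal{P}_{4k,4\ell}(M)$ by a finite sum of quantities of the form $\dim\mathcal{H}_{4(k-i),4(\ell-i)}(M)$, plus an additive constant of at most $1$. So finiteness of $\dim\mathcal{P}_{4k,4\ell}(M)$ reduces to finiteness of each summand.

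To handle each summand, I would compare $\mathcal{H}_{d,d'}(M)$ with the space considered by Wang and Zhu. Their notion of a polynomially bounded biharmonic function requires only a polynomial bound on $|u|$, with no separate control on $|\nabla u|$. Thus $\mathcal{H}_{d,d'}(M)$ is obtained from the Wang--Zhu space $\mathcal{H}_d(M)$ by imposing an extra gradient growth condition, giving an inclusion $\mathcal{H}_{d,d'}(M)\subseteq \mathcal{H}_d(M)$ and therefore $\dim\mathcal{H}_{4(k-i),4(\ell-i)}(M)\le \dim\mathcal{H}_{4(k-i)}(M)$. The hypotheses of the corollary (polynomial volume growth and Ricci curvature bounded below quadratically) are exactly those under which Wang and Zhu establish their theorem, so each $\dim\mathcal{H}_{4(k-i)}(M)$ is finite.

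Summing the finitely many finite quantities then yields $\dim\mathcal{P}_{4k,4\ell}(M)<\infty$. There is no substantive obstacle: the corollary is a formal consequence of the main theorem together with a direct citation to \cite{wang2025qualitativebehaviorbiharmonicfunctions}, and the only observation worth recording is the trivial inclusion $\mathcal{H}_{d,d'}(M)\subseteq\mathcal{H}_d(M)$ that lets us transfer Wang and Zhu's bound to our refined growth class. All of the real work is contained in the proof of Theorem~\ref{dimension bound theorem}.
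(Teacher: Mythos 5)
Your proposal is correct and takes the same route the paper intends: the corollary is stated as an immediate combination of Theorem~\ref{dimension bound theorem} with Wang--Zhu's finite-dimensionality theorem, and you supply exactly the missing detail (the inclusion $\mathcal{H}_{d,d'}(M)\subseteq\mathcal{H}_d(M)$) that makes the reduction work. Nothing further is needed.
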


\subsection{Harmonic and biharmonic functions}

Biharmonic functions arise in several variational problems. Just as minimizing \(\int|\nabla u|^2\) leads one to the Laplace and heat equations, minimizing \(\int |\Delta u|^2\) leads to the biharmonic and biharmonic heat equations.

In general, fourth order elliptic operators arise naturally when taking variations involving second order objects, one major example being variations of metrics in conformal geometry (see \cite{chang1995extremal}, \cite{Lin1998classification}). They also arise in the study of the Willmore energy. For an immersed surface \(\phi: M^2 \rightarrow \mathbb{R}^3\), the Willmore energy is defined as
\begin{equation}\label{Willmore functional}
    \mathcal{W}(\phi) = \int_{M}H^2\,dA
\end{equation}
where \(dA\) is the induced volume element and \(H\) is the mean curvature \cite{willmore2000surfaces}. In studying critical points of this functional one arrives at the Euler-Lagrange equation
\begin{equation}\label{euler-lagrange equation}
    \Delta H + 2H(H^2 - K) = 0,
\end{equation}
a fourth order elliptic operator. The biharmonic heat equation similarly arises when studying the gradient flow of the Willmore energy (\cite{kuwert2002gradient}, \cite{lamm2005biharmonic}). Ancient solutions to heat equations often appear when doing blowup analysis of general solutions to a variational problem. See \cite{kuwert2004removable} for blowup analysis of singularities of Willmore flows. We also again reference (\cite{colding2020complexity}, \cite{colding2019search}, \cite{colding2019liouville}) for more on how ancient solutions to heat equations with polynomially bounded growth are relevant to geometric flows.

Although both arise from variational problems, biharmonic functions in general differ significantly from harmonic functions, because no maximum principle holds for biharmonic functions. This limits the kinds of estimates we can find for biharmonic functions. In particular, the usual pointwise derivative estimates one can find for harmonic functions on a ball cannot be found for a biharmonic function.

On the bright side, energy methods for harmonic and caloric functions seem to have analogs for biharmonic and bicaloric functions, which we will see as we prove Theorem \ref{dimension bound theorem}. We are still limited to some extent, however, because when performing integrations by parts we are forced to use the Bochner formula
\begin{equation*}
    \frac{1}{2}\Delta|\nabla u|^2 = |\nabla^2u| + \langle \nabla \Delta u, \nabla u\rangle + \Ric(\nabla u, \nabla u)
\end{equation*}
to control the factor \(\langle \nabla \Delta u, \nabla u\rangle\). It is the appearance of the Ricci term here that makes the decay on Ricci curvature crucial for our result.

Our methodology is inspired by Colding and Minicozzi's in \cite{colding2021optimal}. We will show a reverse Poincaré inequality for bicaloric functions on ``heat balls'' \(B_R(p) \times [-R^4, 0]\). Because we are considering ancient bicaloric functions, we will be able to apply the inequality as \(R \rightarrow \infty\) to get strong, global control of their behavior. In particular we will see that high order time derivatives \(\partial_t^ku\) must vanish identically, allowing us to write for some finite \(d\):
\[u(x, t) = p_d(x)t^d + \cdots + p_1(x)t + p_0(x)\]
with \(\Delta \Delta p_d = 0\) and \(\Delta \Delta p_j = -(j + 1)p_{j + 1}\). This will allow us to directly compare the spaces \(\mathcal{H}_{4k, 4\ell}(M)\) with \(\mathcal{P}_{4k, 4\ell}(M)\).

To show the dimension estimates are sharp in \(\mathbb{R}^n\), we will consider biharmonic and bicaloric polynomials (analogs of the harmonic polynomials), enabling us to explicitly compute the dimensions of the spaces \(\mathcal{H}_{4k, 4\ell}(\mathbb{R}^n)\) and \(\mathcal{P}_{4k, 4\ell}(\mathbb{R}^n)\).



\section{Ancient Solutions to the Biharmonic Heat Equation}
We begin by proving a reverse-Poincaré inequality.
\begin{lemma}\label{reversepoincare}
    Let \(M\) be a complete Riemannian manifold with \(\Ric\) bounded below quadratically with constant \(K\), and consider a function \(u: M \times I \rightarrow \mathbb{R}\) with \(\partial_tu + \Delta\Delta u = 0\). Fix a point \(p \in M\) and let \(B_r = B_r(p)\) and \(Q_r = B_r \times [-r^4, 0]\). For any \(0 < \epsilon < 1\) there is a constant \(c(n, \epsilon, K)\) such that
    \begin{equation}
        \begin{split}
            & r^4 \left(\int_{Q_{\epsilon r}}|\nabla^2u|^2 + r^2\int_{Q_{\epsilon r}}|\nabla \Delta u|^2 \right) + r^8\left(\int_{Q_{\epsilon r}}u_t^2 + r^2\int_{Q_{\epsilon r}}|\nabla u_t|^2\right) \\ & \quad \le c(n, \epsilon, K)\left(\int_{Q_r}u^2 + r^2\int_{Q_r}|\nabla u|^2\right).
        \end{split}
    \end{equation}
\end{lemma}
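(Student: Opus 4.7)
The plan is to test the PDE \(u_t + \Delta^2 u = 0\) against various derivatives of \(u\) multiplied by a space-time cutoff, then integrate by parts to extract the derivative quantities on the left-hand side. Fix \(\phi = \phi(x,t)\) smooth, compactly supported in \(Q_r\), identically \(1\) on \(Q_{\epsilon r}\), vanishing at \(t=-r^4\), with \(r|\nabla\phi|, r^2|\nabla^2\phi|, r^4|\phi_t|\) all \(O(1)\). I will use \(\phi^{2k}\) as the weight, choosing \(k\) large enough so that all cutoff derivatives \(|\nabla^j\phi^{2k}|^2/\phi^{2k}\) appearing in integration by parts remain pure polynomials in \(\phi\) with the expected \(r^{-2j}\) scaling.

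First I would multiply the equation by \(u\phi^{2k}\) and integrate over \(M\times[-r^4,0]\). The time term \(\int u u_t\phi^{2k}\) produces a nonnegative boundary integral at \(t=0\) plus a lower-order error involving \(\phi_t\); the biharmonic term, after integrating by parts twice, yields \(\int\phi^{2k}(\Delta u)^2\) plus cross-terms \(\int u\,\Delta(\phi^{2k})\Delta u\) and \(\int(\nabla u\cdot\nabla\phi^{2k})\Delta u\) that Young's inequality absorbs into the main term. This gives \(r^4\int_{Q_{\epsilon r}}(\Delta u)^2\le C(\int_{Q_r}u^2 + r^2\int_{Q_r}|\nabla u|^2)\). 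To convert \((\Delta u)^2\) into \(|\nabla^2 u|^2\), I integrate the Bochner identity \(\tfrac{1}{2}\Delta|\nabla u|^2 = |\nabla^2 u|^2 + \langle\nabla\Delta u,\nabla u\rangle + \Ric(\nabla u,\nabla u)\) against \(\phi^{2k}\), solve for \(\int\phi^{2k}|\nabla^2 u|^2\), and bound the Ricci contribution \(-\int\phi^{2k}\Ric(\nabla u,\nabla u)\) by \(Kr^{-2}\int|\nabla u|^2\); this is where the quadratic Ricci lower bound enters, in exactly the right way to preserve the right-hand side scaling.

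Next I would multiply the equation by \(-\Delta u\cdot\phi^{2k}\). After integration by parts the \(\Delta^2 u\) term gives \(\int\phi^{2k}|\nabla\Delta u|^2\) up to an absorbable cross-term, while the \(u_t\) term reduces to a time-boundary integral plus \(\int\phi_t|\nabla u|^2\) and, crucially, a term of the form \(\int u_t\,\nabla u\cdot\nabla\phi^{2k}\) that cannot be handled directly since no \(L^2\) bound on \(u_t\) is yet available. The workaround is to substitute \(u_t=-\Delta^2 u\) in this remaining term and integrate by parts twice more in space, converting it to expressions in \(|\nabla\Delta u|^2\) (absorbed) and in \(|\nabla^2 u|^2\) and \(|\nabla u|^2\) already controlled. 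This yields the \(r^6\int|\nabla\Delta u|^2\) bound.

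Finally, the estimates on \(u_t\) and \(\nabla u_t\) follow from the observation that \(\Delta u\) is itself bicaloric: differentiating the equation gives \((\Delta u)_t + \Delta^2(\Delta u)=0\). Applying the two estimates just proved to the function \(\Delta u\) on a nested cube \(Q_{\epsilon^2 r}\subset Q_{\epsilon r}\) controls \(\int(\Delta^2 u)^2\) and \(\int|\nabla\Delta^2 u|^2\) by \(\int_{Q_{\epsilon r}}((\Delta u)^2 + r^2|\nabla\Delta u|^2)\), which in turn chains with the previous bounds to yield the desired \(r^8\int u_t^2\) and \(r^{10}\int|\nabla u_t|^2\) estimates (using \(u_t=-\Delta^2 u\) and \(\nabla u_t = -\nabla\Delta^2 u\)). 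I expect the main technical obstacle to be the handling of the \(\int u_t\,\nabla u\cdot\nabla\phi^{2k}\) term in the third step, together with the bookkeeping of absorption constants throughout to ensure Young's inequality can always close up without a circular dependence on quantities not yet estimated.
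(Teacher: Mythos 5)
Your proof is correct in spirit and reaches the same conclusion, but it takes a genuinely different route in two respects, both worth noting. First, the paper uses a purely spatial cutoff \(\psi\) and computes \(\partial_t\int u^2\psi^2\), \(\partial_t\int|\Delta u|^2\psi^2\), \(\partial_t\int|\nabla u|^2\psi^2\); integrating in time then leaves a boundary term at \(t=-R^4\) which is disposed of by a mean-value argument choosing a good time slice \(r_1\in[a_1r,r]\). Your space-time cutoff \(\phi\) vanishing at \(t=-r^4\) trades that boundary slice for a \(\phi_t\) term of the correct \(r^{-4}\) order; both work, and yours is arguably cleaner bookkeeping. Second, and more substantively, the paper's ordering of the intermediate estimates differs from yours. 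The paper establishes the \(\int u_t^2\) bound (Lemma \ref{utboundstep2}) \emph{before} the \(\int|\nabla\Delta u|^2\) bound (Lemma \ref{utboundstep3}), precisely so that the troublesome term you flag --- the cross term \(\int u_t\langle\nabla u,\nabla\psi^2\rangle\) arising when testing against \(\Delta u\) --- can be absorbed directly via Young's inequality as \(A\int u_t^2|\nabla\psi|^2 + A^{-1}\int|\nabla u|^2\psi^2\) with the \(u_t^2\) piece already controlled. (The \(u_t^2\) bound itself is obtained by testing against \(u_t\) and using the auxiliary inequality \(\int|\nabla\Delta u|^2\psi^2\le 2\int u_t\Delta u\,\psi^2+4\int|\Delta u|^2|\nabla\psi|^2\).) You instead propose to estimate \(\int|\nabla\Delta u|^2\) first, handling the cross term by substituting \(u_t=-\Delta^2u\) and integrating by parts again, converting it into \(\nabla\Delta u\) paired against \(\nabla^2u\cdot\nabla\phi^{2k}\) and \(\nabla u\cdot\nabla^2\phi^{2k}\) --- both absorbable. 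This is valid, and it lets you subsequently obtain \emph{both} the \(u_t^2\) and \(\nabla u_t\) bounds in one stroke by chaining the two estimates applied to the bicaloric function \(\Delta u\), whereas the paper uses the bicaloric chaining only for \(\nabla u_t\) in Lemma \ref{utboundstep4}. Your version trades one additional integration by parts for the elimination of a separate intermediate lemma; the paper's version keeps each step a single identity plus Young's inequality. Neither has a gap; the constants close without circularity in either arrangement.
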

We proceed by proving the estimate for each term on the left hand side.
\begin{lemma}\label{utboundstep1}
    \begin{equation}
        \begin{split}
            r^4\int_{Q_{\epsilon r}}|\nabla^2u|^2 \le c(n, \epsilon, K)\left(\int_{Q_r}u^2 + r^2\int_{Q_r}|\nabla u|^2\right).
        \end{split}
    \end{equation}
\end{lemma}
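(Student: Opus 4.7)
The strategy is a Caccioppoli-type argument in two reductions. I fix a space-time cutoff $\phi(x,t) = \psi(x)^2\eta(t)$ where $\psi$ is a smooth spatial cutoff equal to $1$ on $B_{\epsilon r}$ and supported in $B_r$, and $\eta$ is a smooth temporal cutoff equal to $1$ on $[-(\epsilon r)^4, 0]$ and vanishing for $t \le -r^4$. Standard constructions give $|\nabla\phi|^2 \le C\phi/r^2$, $|\Delta\phi| \le C/r^2$, and $|\phi_t| \le C/r^4$, with $C$ depending only on $n$ and $\epsilon$.

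The first reduction trades $|\nabla^2u|^2$ for $(\Delta u)^2$. Applying the Bochner formula quoted in the introduction and integrating against $\phi^2$ over $Q_r$, two spatial integrations by parts give
\begin{equation*}
    \int_{Q_r}\phi^2|\nabla^2u|^2 = \tfrac{1}{2}\int_{Q_r}\Delta(\phi^2)|\nabla u|^2 + \int_{Q_r}\phi^2(\Delta u)^2 + 2\int_{Q_r}\phi\,\Delta u\,\nabla\phi\cdot\nabla u - \int_{Q_r}\phi^2\Ric(\nabla u,\nabla u).
\end{equation*}
The Ricci lower bound $\Ric \ge -K/r^2$ on $B_r$ handles the last term, and Cauchy-Schwarz on the cross term absorbs a piece of $\int\phi^2(\Delta u)^2$ at the cost of a $C r^{-2}\int|\nabla u|^2$ contribution. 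This yields
\begin{equation*}
    \int_{Q_r}\phi^2|\nabla^2u|^2 \le C\int_{Q_r}\phi^2(\Delta u)^2 + \frac{C(1+K)}{r^2}\int_{Q_r}|\nabla u|^2.
\end{equation*}

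The second reduction uses the bicaloric equation to control $\int_{Q_r}\phi^2(\Delta u)^2$. Multiplying $u_t + \Delta^2u = 0$ by $\phi^2 u$ and integrating by parts in time gives $\int\phi^2 u u_t = -\tfrac{1}{2}\int\phi^2(x,0)u^2(x,0)\,dx + \tfrac{1}{2}\int(\phi^2)_t u^2$ (the lower boundary vanishes by the choice of $\eta$); the endpoint term at $t=0$ has a favorable sign and may be dropped in the upper bound. Integrating the $\Delta^2u$ piece by parts twice in space via $\int\phi^2 u\,\Delta^2u = \int\Delta(\phi^2u)\,\Delta u$ produces $\int\phi^2(\Delta u)^2$ together with cutoff errors of the form $u\,\Delta\phi\,\Delta u$, $|\nabla\phi|^2 u\,\Delta u$, and $\phi\,\nabla\phi\cdot\nabla u\,\Delta u$. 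Using $|\nabla\phi|^2 \le C\phi/r^2$, each such error can be Cauchy-Schwarz'd against $\tfrac{1}{8}\phi^2(\Delta u)^2$ and absorbed into the left-hand side, leaving
\begin{equation*}
    \int_{Q_r}\phi^2(\Delta u)^2 \le \frac{C}{r^4}\int_{Q_r}u^2 + \frac{C}{r^2}\int_{Q_r}|\nabla u|^2.
\end{equation*}
Chaining the two reductions, multiplying by $r^4$, and using $\phi \equiv 1$ on $Q_{\epsilon r}$ yields the claim.

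The main obstacle is the absorption step in the second reduction: a generic cutoff would leave a term $|\nabla\phi|^2 u\,\Delta u$ whose $\Delta u$ factor cannot be absorbed into $\int\phi^2(\Delta u)^2$, since the cutoff weight is only $|\nabla\phi|^2$ rather than $\phi^2$. Taking $\phi=\psi^2$ is exactly what delivers the bound $|\nabla\phi|^2 \le C\phi/r^2$ that makes this absorption close. A secondary subtlety is ensuring the time boundary term at $t=0$ has the correct sign, which is automatic here since we are taking an upper bound on $\int\phi^2(\Delta u)^2$; the Ricci assumption enters only through the first reduction, and is responsible for the $K$ dependence in the constant $c(n,\epsilon,K)$.
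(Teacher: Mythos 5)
Your proof is correct and reaches the same estimate, but by a genuinely different route. The paper keeps a \emph{spatial} cutoff $\psi$ only, computes $\partial_t\int u^2\psi^2$ via the bicaloric equation, and runs a single chain of integrations by parts in which the Bochner formula is substituted mid-stream to replace $\langle\nabla\Delta u,\nabla u\rangle$; it then integrates the resulting differential inequality in $t$ and deals with the leftover boundary term $\int_{t=-R^4}\int_{B_R}u^2\psi^2$ by a mean-value argument, picking a good radius $r_1\in[a_1 r,r]$ for which that slice is controlled by the space-time integral. You instead use a full \emph{space-time} cutoff vanishing at $t=-r^4$, which kills the bad boundary term outright and renders the mean-value step unnecessary, and you split the argument into two clean Caccioppoli reductions: $|\nabla^2u|^2\to(\Delta u)^2$ via Bochner, then $(\Delta u)^2\to u^2,|\nabla u|^2$ via the equation. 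Your remark that the choice $\phi=\psi^2\eta$ delivers $|\nabla\phi|^2\lesssim\phi/r^2$, so that the $u\,|\nabla\phi|^2\,\Delta u$ error absorbs into $\int\phi^2(\Delta u)^2$, is exactly the right mechanism; the paper handles the analogous term $\tfrac{1}{B}\int|\Delta u|^2|\nabla\psi|^2$ only implicitly and in fact elides that the factor $+\tfrac{nR^2}{B'}|\nabla\psi|^2$ lives where $\psi<1$, so your treatment is cleaner at that point.

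Two small blemishes that do not affect the conclusion. First, the time integration by parts should read $\int\phi^2 u u_t = \tfrac{1}{2}\int_{t=0}\phi^2 u^2 - \tfrac{1}{2}\int(\phi^2)_t u^2$; you have both signs reversed, though since what you use is $-\int\phi^2 u u_t$, the $t=0$ endpoint term does appear with the favorable (nonpositive) sign and can indeed be dropped. Second, on a general manifold with only a quadratic Ricci lower bound, the cutoff estimate $|\Delta\phi|\le C/r^2$ requires the Laplacian comparison theorem and hence $C=C(n,\epsilon,K)$, not $C(n,\epsilon)$; this matters only for bookkeeping since you correctly allow $K$ into the final constant.
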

\begin{proof}
    Let \(\psi\) be a cutoff function on \(B_R \subset M\) for some \(R > 0\). Using \(u_t = -\Delta \Delta u\), integration by parts, the Bochner formula, and the lower bound on \(\Ric\), we find
    \begin{equation}\label{initial integration by parts}
        \begin{split}
            \partial_t \int_{B_R} u^2\psi^2 & = 2\int_{B_R}uu_t\psi^2 = -2\int_{B_R}(u\psi^2)\Delta \Delta u = -2\int_{B_R}\Delta(u\psi^2)\Delta u
            \\ & = -2\int_{B_R}(\Delta u)^2\psi^2 -2\int_{B_R}u\Delta u \Delta \psi^2 - 4\int \Delta u \langle \nabla u, \nabla \psi^2\rangle 
            \\ & = 2\int_{B_R}\langle \nabla(\psi^2\Delta u), \nabla u \rangle -2\int_{B_R}u\Delta u \Delta \psi^2- 4\int \Delta u \langle \nabla u, \nabla \psi^2\rangle
            \\ & = 2\int_{B_R}\langle \nabla \Delta u, \nabla u\rangle \psi^2 -2\int u\Delta u\Delta \psi^2 - 2\int \Delta u \langle \nabla u, \nabla \psi^2\rangle 
            \\ & = \int \Delta |\nabla u|^2 \psi^2 - 2\int |\nabla^2 u|^2\psi^2 - 2\int \Ric(\nabla u, \nabla u)\psi^2
            \\ & \quad\quad -2\int u\Delta u\Delta \psi^2 - 2\int \Delta u \langle \nabla u, \nabla \psi^2\rangle 
            \\ & \le \int |\nabla u|^2 |\Delta \psi^2| - 2\int |\nabla^2 u|^2\psi^2 + 2\int \frac{K}{R^2}|\nabla u|^2\psi^2
            \\ & \quad\quad -4\int (u\Delta u)(\psi \Delta \psi) -4\int u\Delta u |\nabla \psi|^2 - 4\int \psi\Delta u \langle \nabla u, \nabla \psi\rangle.
        \end{split}
    \end{equation}
    We recall the absorbing inequality: for \(\eta > 0\) and real numbers \(a, b\), we have \(2ab \le \eta a^2 + \eta^{-1}b^2\). Now we apply the Cauchy-Schwarz inequality and the absorbing inequality to the integrals in the last line of (\ref{initial integration by parts}) to find
    \begin{equation}
        \begin{split}
            \partial_t \int_{B_R} u^2\psi^2 & \le \int |\nabla u|^2 |\Delta \psi^2| - 2\int |\nabla^2 u|^2\psi^2 + 2\int \frac{K}{R^2}|\nabla u|^2\psi^2
            \\ & \quad\quad -4\int (u\Delta u)(\psi \Delta \psi) -4\int u\Delta u |\nabla \psi|^2 - 4\int \psi\Delta u \langle \nabla u, \nabla \psi\rangle.
            \\ & \le \int |\nabla u|^2 |\Delta \psi^2| - 2\int |\nabla^2 u|^2\psi^2 + 2\int \frac{K}{R^2}|\nabla u|^2\psi^2
            \\ & \quad\quad +  A\int u^2|\Delta \psi|^2 + \frac{1}{A}\int |\Delta u|^2\psi^2 + B\int u^2|\nabla\psi|^2 + \frac{1}{B}\int |\Delta u|^2|\nabla \psi|^2
            \\ & \quad \quad + C\int|\nabla u|^2|\nabla \psi|^2 + \frac{1}{C}\int |\Delta u|^2\psi^2
        \end{split}
    \end{equation}
    where \(A, B, C > 0\) are quantities which will be determined later. Now we note that \(|\Delta u|^2 \le n|\nabla^2u|^2\) and rearrange terms to get
    \begin{equation}\label{grouped up psi terms in proof of first integral estimate}
        \begin{split}
            \partial_t\int u^2\psi^2 & \le \int |\nabla^2 u|^2\left(-2\psi^2 + \frac{n}{A}\psi^2 + \frac{n}{C}\psi^2 + \frac{n}{B}|\nabla \psi|^2\right) \\ & \quad + \int u^2\left(A|\Delta \psi|^2 + B|\nabla \psi|^2\right) \\ & \quad + \int |\nabla u|^2\left(|\Delta \psi^2| + C|\nabla \psi|^2 + \frac{2K}{R^2}\psi^2\right).
        \end{split}
    \end{equation}
    If we choose \(\psi\) to vanish beyond \(B_{aR}\) for some \(0 < a < 1\), then \(|\nabla \psi|^2\) also vanishes beyond \(B_{aR}\), and we have the estimates
    \[|\psi| \le 1, \quad |\nabla \psi| \le \frac{c(n, a, K)}{R}, \quad |\Delta \psi| \le \frac{c(n, a, K)}{R^2},\]
    which are of course independent from \(u\). We will let \(c(\cdot)\) denote a potentially different constant each time it appears, and also note that
    \[|\Delta \psi^2| = |2\psi \Delta \psi + 2|\nabla \psi|^2| \le 2|\psi||\Delta \psi| + 2|\nabla \psi|^2 \le \frac{c(n, a, K)}{R^2}.\]
    If we choose
    \[A = A'(n, a, K), \quad B = \frac{B'(n, a, K)}{R^2}, \quad C = C'(n, a, K)\]
    for appropriate constants \(A', B', C' > 0\), we can arrange that
    \begin{equation}
        \begin{split}
            -2\psi^2 + \frac{n}{A}\psi^2 + \frac{n}{C}\psi^2 + \frac{n}{B}|\nabla \psi|^2 & \le -\psi^2 + \frac{nR^2}{B'}|\nabla \psi|^2, \\ A|\Delta \psi|^2 + B|\nabla \psi|^2 & \le \frac{A'c(n, a, K)}{R^4} + \frac{B'c(n, a, K)}{R^4}, \\ |\Delta \psi^2| + C|\nabla \psi|^2 & \le \frac{c(n, a, K)}{R^2} + \frac{C'c(n, a, K)}{R^2}.
        \end{split}
    \end{equation}
    Now in equation (\ref{grouped up psi terms in proof of first integral estimate}) we can bound the quantities dependent on \(\psi\) in parentheses to find
    \begin{equation}
        \begin{split}
            & \frac{1}{2}\int_{B_{aR}} |\nabla^2 u|^2 + \partial_t\int_{B_R} u^2\psi^2 
             \le \frac{c(n, a, K)}{R^4}\int_{B_R} u^2 + \frac{c(n, a, K)}{R^2}\int_{B_R} |\nabla u|^2.
        \end{split}
    \end{equation}
    Integrating from \(t = -R^4\) through \(t = 0\), we find that
    \begin{equation}
        \begin{split}
            & \frac{1}{2}\int_{Q_{aR}} |\nabla^2 u|^2 + \int_{t = 0}\int_{B_R} u^2\psi^2 - \int_{t = -R^4}\int_{B_R}u^2\psi^2 \\ &\quad \le \frac{c(n, a, K)}{R^4}\int_{Q_R} u^2 + \frac{c(n, a, K)}{R^2}\int_{Q_R} |\nabla u|^2
        \end{split}
    \end{equation}
    and this gives us
    \begin{equation}\label{definiteintegralhessian}
        \begin{split}
            \int_{Q_{aR}} |\nabla^2 u|^2 \le c(n, a, K)\left(\int_{t = -R^4}\int_{B_R}u^2 + \frac{1}{R^4}\int_{Q_R} u^2 + \frac{1}{R^2}\int_{Q_R} |\nabla u|^2\right).
        \end{split}
    \end{equation}
    Now we use the mean value theorem to bound \(\int_{t = -R^4}\int_{B_R}u^2\), following \cite{colding2021optimal}. Fix \(0 < \epsilon < 1\) and \(r > 0\), as in the lemma statement. For \(0 < a_1 < 1\) there is some \(r_1 \in [a_1r, r]\) such that 
    \begin{equation}
        \begin{split}
            \int_{B_{r_1} \times \{t = -r_1^4\}}u^2 & = \frac{c(a_1)}{r^4}\int_{-r^4}^{-a_1^4r^4}\int_{B_{r_1}}u^2 \le \frac{c(a_1)}{r^4}\int_{-r^4}^{0}\int_{B_{r_1}}u^2 \\ & \le \frac{c(a_1)}{r^4}\int_{Q_r}u^2.
        \end{split}
    \end{equation}
    Choose \(a_1 \in (0, 1)\) such that \(\epsilon < a_1^2\). Replacing \(R\) with \(r_1\) and \(a\) with \(a_1\) in equation (\ref{definiteintegralhessian}) gives us
    \begin{equation}
        \begin{split}
            \int_{Q_{\epsilon r}} |\nabla^2 u|^2 & \le \int_{Q_{a_1r_1}} |\nabla^2 u|^2 \\ & \quad \le c(n, a_1, K)\left(\int_{t = -r_1^4}\int_{B_{r_1}}u^2 + \frac{1}{r_1^4}\int_{Q_{r_1}} u^2 + \frac{1}{r_1^2}\int_{Q_{r_1}} |\nabla u|^2\right) \\ & \quad \le c(n, \epsilon, K)\left(\frac{1}{r^4}\int_{Q_{r}} u^2 + \frac{1}{r^2}\int_{Q_{r}} |\nabla u|^2\right).
        \end{split}
    \end{equation}
    Multiplying through by \(r^4\) proves the lemma.
    
\end{proof}
\begin{lemma}\label{utboundstep2}
    In the same notation as the previous lemma,
    \begin{equation}
        \begin{split}
            r^8\int_{Q_{\epsilon r}}u_t^2 \le c(n, \epsilon, K)\left(\int_{Q_r}u^2 + r^2\int_{Q_r}|\nabla u|^2\right).
        \end{split}
    \end{equation}
\end{lemma}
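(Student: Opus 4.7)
The plan is to mimic the strategy of Lemma~\ref{utboundstep1} by identifying a quadratic quantity whose time derivative isolates $u_t^2$. Since $u_t = -\Delta\Delta u$ and $[\partial_t,\Delta]=0$, computing $\partial_t \int (\Delta u)^2 \psi^2$ gives $-2\int \Delta u \cdot \Delta\Delta\Delta u \, \psi^2$, and two integrations by parts (transferring both Laplacians from $\Delta\Delta\Delta u$ onto $\Delta u \, \psi^2$, then using $\Delta\Delta u = -u_t$) yield the identity
\begin{equation*}
    2\int u_t^2 \psi^2 = -\partial_t \int (\Delta u)^2 \psi^2 + 4\int u_t \, \nabla \Delta u \cdot \nabla \psi^2 + 2\int u_t \, \Delta u \, \Delta \psi^2.
\end{equation*}
This plays the same role here that the identity extracting $|\nabla^2 u|^2$ from $\partial_t \int u^2 \psi^2$ played in the previous lemma.

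Next, I would bound the two cutoff-error integrals using the absorbing inequality, aiming to pull a small multiple of $\int u_t^2 \psi^2$ back to the left-hand side. The $\Delta\psi^2$ term produces $(\Delta u)^2$ remainders, which are handled by Lemma~\ref{utboundstep1} together with $|\Delta u|^2 \le n|\nabla^2 u|^2$. The $\nabla\Delta u$ term produces, after absorbing, a remainder of the form $\int |\nabla \Delta u|^2 |\nabla \psi|^2$, and here lies the main obstacle: $|\nabla \Delta u|^2$ is not controlled by any prior estimate. I would resolve this via the auxiliary identity
\begin{equation*}
    \int |\nabla \Delta u|^2 \tilde\psi^2 = \int (\Delta u) u_t \, \tilde\psi^2 + \tfrac{1}{2}\int (\Delta u)^2 \Delta \tilde\psi^2,
\end{equation*}
derived by applying integration by parts in $\int |\nabla \Delta u|^2 \tilde\psi^2 = -\int \Delta u \Delta\Delta u \, \tilde\psi^2 - \int \Delta u \, \nabla \Delta u \cdot \nabla \tilde\psi^2$ together with $\Delta\Delta u = -u_t$ and $\Delta u \, \nabla \Delta u = \tfrac{1}{2}\nabla (\Delta u)^2$, where $\tilde\psi$ is a slightly enlarged cutoff equal to $1$ on the support of $\nabla \psi$. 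A second absorbing on $|\Delta u \cdot u_t|$ then bounds $\int |\nabla \Delta u|^2 \tilde\psi^2$ by a small multiple of $\int u_t^2 \tilde\psi^2$ (ultimately absorbable, thanks to the $|\nabla \psi|^2 \lesssim R^{-2}$ prefactor and a careful choice of absorbing parameters) plus $(\Delta u)^2$ pieces controlled by Lemma~\ref{utboundstep1}.

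Finally, I would integrate the main identity in time from $-R^4$ to $0$. The $\partial_t$ term yields the boundary value $\int_{t=-R^4}(\Delta u)^2 \psi^2$ (the $t=0$ value has the favorable sign and is discarded). As in the proof of Lemma~\ref{utboundstep1}, applying the mean value theorem in the time variable on $[-r^4, -a_1^4 r^4]$ selects a favorable time $r_1 \in [a_1 r, r]$ for which this boundary integral is bounded by $\frac{c}{r^4}\int_{Q_r}(\Delta u)^2$; invoking Lemma~\ref{utboundstep1} once more bounds that in turn by $\frac{c}{r^8}\bigl(\int_{Q_r} u^2 + r^2\int_{Q_r}|\nabla u|^2\bigr)$. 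Multiplying through by $r^8$ then yields the desired estimate.
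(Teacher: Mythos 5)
Your proposal is correct and follows essentially the same route as the paper: compute $\partial_t\int(\Delta u)^2\psi^2$, integrate by parts to isolate $-2\int u_t^2\psi^2$, control the cutoff remainders (including the $\nabla\Delta u$ term via an auxiliary integration by parts that converts $\int|\nabla\Delta u|^2$ back to $u_t\Delta u$ plus $(\Delta u)^2$ pieces), integrate in time, use the mean-value selection of a good time slice, and finish by invoking Lemma~\ref{utboundstep1}. The only cosmetic differences are that you derive the auxiliary bound as an exact identity (using $\Delta u\,\nabla\Delta u = \tfrac12\nabla(\Delta u)^2$) where the paper settles for the inequality $\int|\nabla\Delta u|^2\psi^2\le 2\int u_t\Delta u\,\psi^2+4\int|\Delta u|^2|\nabla\psi|^2$, and that your Cauchy–Schwarz pairing of $4\int u_t\nabla\Delta u\cdot\nabla\psi^2$ puts the $\psi^2$ weight on $u_t^2$ (so the $|\nabla\Delta u|^2$ remainder carries $|\nabla\psi|^2$), which forces the slightly enlarged cutoff $\tilde\psi$; the paper instead pairs so that $|\nabla\Delta u|^2$ keeps the $\psi^2$ weight, avoiding $\tilde\psi$ but leaving an $A\int u_t^2|\nabla\psi|^2$ term. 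Both variants ultimately rely on the same (standard, and in both cases left implicit) absorption/iteration step to handle the $u_t^2$ contribution supported where $\nabla\psi\ne 0$; neither is more rigorous than the other on this point.
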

\begin{proof}
    The proof is essentially the same as that of the previous lemma. For clarity, we make the following initial observation:
    \begin{equation}
        \begin{split}
            \int |\nabla \Delta u|^2\psi^2 & = \int \langle \nabla \Delta u, \nabla \Delta u\rangle\psi^2 = -\int \Delta\Delta u \Delta u \psi^2 - 2\int \psi \Delta u \langle \nabla \Delta u, \nabla \psi\rangle 
            \\ & \le \int u_t\Delta u\psi^2 + 2\int |\Delta u|^2|\nabla \psi|^2 + \frac{1}{2}\int |\nabla \Delta u|^2\psi^2
        \end{split}
    \end{equation}
    so that
    \begin{equation}\label{nabla delta bound}
        \begin{split}
            \int |\nabla \Delta u|^2\psi^2 \le 2\int u_t\Delta u\psi^2 + 4\int |\Delta u|^2|\nabla \psi|^2.
        \end{split}
    \end{equation}
    Now using (\ref{nabla delta bound}), integration by parts, the Cauchy-Schwarz inequality, and the absorbing inequality, we find
    \begin{equation}\label{step2integral1}
        \begin{split}
            \partial_t \int_{B_R} |\Delta u|^2\psi^2 & = -2\int u_t^2 \psi^2 + 8\int u_t\psi\langle \nabla \Delta u, \nabla \psi\rangle + 2\int u_t\Delta u \Delta \psi^2 
            \\ & \le -2\int u_t^2 \psi^2 + A\int u_t^2|\nabla\psi|^2 + \frac{1}{A}\int |\nabla \Delta u|^2\psi^2
            \\ & \quad\quad + 4\int u_t\Delta u\psi\Delta \psi + 4\int u_t\Delta u|\nabla \psi|^2
            \\ & \le -2\int u_t^2 \psi^2 + A\int u_t^2|\nabla\psi|^2 + \frac{4}{A}\int|\Delta u|^2|\nabla \psi|^2
            \\ & \quad\quad + \frac{2B}{A}\int u_t^2\psi^2 + \frac{2}{AB}\int |\Delta u|^2\psi^2
            \\ & \quad\quad + C\int u_t^2\psi^2 + \frac{1}{C}\int |\Delta u|^2|\Delta \psi|^2 
            \\ & \quad\quad + D\int u_t^2|\nabla \psi|^2 + \frac{1}{D}\int |\Delta u|^2|\nabla \psi|^2.
        \end{split}
    \end{equation}
    We choose \(A = A'R^2\), \(B = B'R^2\), \(C = C'\), and \(D = D'R^2\). Then, by rearranging terms, integrating in time, and using the mean value property we find
    \begin{equation}
        \int_{Q_{\epsilon r}}u_t^2 \le \frac{c(n, \epsilon, K)}{r^4}\int_{Q_r}|\Delta u|^2 \le \frac{c(n, \epsilon, K)}{r^4}\int_{Q_r}|\nabla^2 u|^2.
    \end{equation}
    Applying lemma \ref{utboundstep1} completes the proof.
    
\end{proof}
\begin{lemma}\label{utboundstep3}
    In the same notation as the previous lemmas,
    \begin{equation}
        \begin{split}
            r^6\int_{Q_{\epsilon r}}|\nabla \Delta u|^2 \le c(n, \epsilon, K)\left(\int_{Q_r}u^2 + r^2\int_{Q_r}|\nabla u|^2\right).
        \end{split}
    \end{equation}
\end{lemma}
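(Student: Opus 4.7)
The plan is to leverage inequality (2.13) from the proof of Lemma \ref{utboundstep2}, which already gives the spatial bound
\[\int |\nabla \Delta u|^2\psi^2 \le 2\int u_t\Delta u\psi^2 + 4\int |\Delta u|^2|\nabla \psi|^2,\]
and then apply Lemmas \ref{utboundstep1} and \ref{utboundstep2} to absorb the terms on the right. The scaling that needs to come out is $r^6$ on the left, which means the Cauchy--Schwarz split of $u_t\Delta u$ must be scale-balanced with a weight proportional to $r^2$: the factor $u_t^2$ has $r^{-4}$ worse dimension than $|\Delta u|^2$, so applying the absorbing inequality $2ab \le \eta a^2 + \eta^{-1} b^2$ with $\eta = r^2$ is the natural choice.

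First, I would take a cutoff $\psi$ supported in $B_{r_1}$ and equal to $1$ on $B_{\epsilon r}$, where $r_1 = \tfrac{1+\epsilon}{2}\,r$ is an intermediate radius chosen so that $|\nabla \psi| \le c(\epsilon)/r$. Using the absorbing inequality with $\eta = r^2$ on the first term and the gradient bound on the second, I get
\[\int_{B_{\epsilon r}} |\nabla \Delta u|^2 \le r^2 \int_{B_{r_1}} u_t^2 + \frac{1}{r^2}\int_{B_{r_1}}|\Delta u|^2 + \frac{c(\epsilon)}{r^2}\int_{B_{r_1}} |\Delta u|^2.\]
Invoking the pointwise bound $|\Delta u|^2 \le n|\nabla^2 u|^2$ and integrating in time over $[-(\epsilon r)^4, 0] \subset [-r_1^4, 0]$, the right-hand side becomes a sum of integrals over $Q_{r_1}$:
\[\int_{Q_{\epsilon r}} |\nabla \Delta u|^2 \le c(n,\epsilon)\left(r^2\int_{Q_{r_1}} u_t^2 + \frac{1}{r^2}\int_{Q_{r_1}}|\nabla^2 u|^2\right).\]

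Finally, I apply Lemmas \ref{utboundstep1} and \ref{utboundstep2} to the intermediate heat ball $Q_{r_1}$: since $r_1/r = (1+\epsilon)/2 \in (\epsilon, 1)$ is a valid parameter, these give
\[\int_{Q_{r_1}}|\nabla^2 u|^2 \le \frac{c(n,\epsilon,K)}{r^4}\left(\int_{Q_r} u^2 + r^2\int_{Q_r}|\nabla u|^2\right), \qquad \int_{Q_{r_1}} u_t^2 \le \frac{c(n,\epsilon,K)}{r^8}\left(\int_{Q_r} u^2 + r^2\int_{Q_r}|\nabla u|^2\right).\]
Both contributions then pick up a factor of $r^{-6}$, and multiplying through by $r^6$ yields the claim. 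No genuine obstacle appears: the identity (2.13) and the two preceding lemmas have already done the hard analytic work, so the proof reduces to selecting the absorbing weight $\eta \sim r^2$ and an intermediate radius $r_1$ to enable reapplication of the earlier bounds.
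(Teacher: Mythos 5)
Your proof is correct, and it takes a genuinely different route from the paper. The paper's proof computes the time derivative of the energy $\int |\nabla u|^2\psi^2$, integrates by parts to expose $-\int|\nabla\Delta u|^2\psi^2$ with a favorable sign, then integrates in time from $-R^4$ to $0$; this produces a boundary term $\int_{t=-R^4}\int|\nabla u|^2\psi^2$ that must be controlled with the same mean-value-theorem device used in Lemmas~\ref{utboundstep1} and~\ref{utboundstep2}. You instead start from the purely spatial inequality
\(\int |\nabla \Delta u|^2\psi^2 \le 2\int u_t\Delta u\psi^2 + 4\int |\Delta u|^2|\nabla \psi|^2\)
(established inside the proof of Lemma~\ref{utboundstep2}), apply the absorbing inequality with the scale-balanced weight $\eta = r^2$, integrate in time directly with no boundary term, and then invoke Lemmas~\ref{utboundstep1} and~\ref{utboundstep2} on an intermediate cylinder $Q_{r_1}$, $r_1 = \tfrac{1+\epsilon}{2}r$. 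The scaling bookkeeping ($r^2 \cdot r^{-8}$ and $r^{-2}\cdot r^{-4}$ both producing $r^{-6}$) is correct, and the choice of $\epsilon' = (1+\epsilon)/2$ as the parameter for the intermediate lemma applications is legitimate. What your route buys is a cleaner proof: no time-boundary term, hence no repeated mean-value-theorem argument. What the paper's route buys is uniformity of method --- every lemma in the chain is proved by the same $\partial_t(\text{energy})$ pattern. One small nitpick: your cutoff gradient bound should be written $c(n,\epsilon,K)/r$ rather than $c(\epsilon)/r$, since constructing such a cutoff on a general manifold uses the Ricci lower bound (consistent with the constants in Lemma~\ref{utboundstep1}).
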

\begin{proof}
    Using the same tools as before,
    \begin{equation}
        \begin{split}
            \partial_t\int |\nabla u|^2\psi^2 & = -2\int u_t\langle \nabla u, \nabla \psi^2\rangle - 2\int u_t\psi^2\Delta u
            \\ & = -4\int u_t\psi\langle \nabla u, \nabla \psi\rangle -2\int \langle \nabla \Delta u, \nabla(\Delta u \psi^2)\rangle
            \\ & \le A\int u_t^2|\nabla \psi|^2 + \frac{1}{A}\int |\nabla u|^2\psi^2 -\int |\nabla \Delta u|^2\psi^2 + 2\int |\Delta u|^2|\nabla \psi|^2.
        \end{split}
    \end{equation}
    Choosing \(A = A'R^4\) and using the mean value theorem then gives
    \begin{equation}
        \begin{split}
            r^6\int_{Q_{\epsilon r}}|\nabla \Delta u|^2 \le c(n, \epsilon, K)\left(r^2\int_{Q_r}|\nabla u|^2 + r^4\int_{Q_r}|\nabla^2 u|^2 + r^8\int_{Q_r}u_t^2\right).
        \end{split}
    \end{equation}
    Applying lemmas \ref{utboundstep1} and \ref{utboundstep2} completes the proof.
    
\end{proof}
\begin{lemma}\label{utboundstep4}
    In the same notation as the previous lemmas, we have
    \begin{equation}
        \begin{split}
            r^{10}\int_{Q_{\epsilon r}}|\nabla u_t|^2 \le c(n, \epsilon, K)\left(\int_{Q_{r}}u^2 + r^2\int_{Q_{r}}|\nabla u|^2\right).
        \end{split}
    \end{equation}
\end{lemma}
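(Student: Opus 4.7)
The plan is to bootstrap from Lemma \ref{utboundstep3} via the key observation that if $u$ satisfies $\partial_t u + \Delta\Delta u = 0$ then so does $\Delta u$: since $\partial_t$ commutes with $\Delta$, we have $(\partial_t + \Delta\Delta)(\Delta u) = \Delta(\partial_t u + \Delta\Delta u) = 0$. Moreover, applying the bicaloric equation to $u$ itself gives $\nabla \Delta(\Delta u) = -\nabla u_t$, so the quantity $|\nabla \Delta v|^2$ appearing on the left side of Lemma \ref{utboundstep3} for $v = \Delta u$ is exactly $|\nabla u_t|^2$. This reduces the problem to bounding $\int (\Delta u)^2$ and $\int |\nabla \Delta u|^2$ in terms of $\int u^2$ and $\int |\nabla u|^2$, both of which are accessible through earlier lemmas.

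To make the radii match across three invocations of earlier lemmas, I would fix an intermediate scale $\epsilon_1 = (1+\epsilon)/2 \in (\epsilon, 1)$ and apply Lemma \ref{utboundstep3} to the bicaloric function $\Delta u$ on the heat ball of radius $\epsilon_1 r$ with ratio $\epsilon/\epsilon_1$, yielding
\[
r^6 \int_{Q_{\epsilon r}} |\nabla u_t|^2 \;\le\; c(n, \epsilon, K)\left(\int_{Q_{\epsilon_1 r}} (\Delta u)^2 + r^2 \int_{Q_{\epsilon_1 r}} |\nabla \Delta u|^2\right).
\]
I would then bound the two terms on the right at the outer scale $r$: using the pointwise inequality $(\Delta u)^2 \le n|\nabla^2 u|^2$ and Lemma \ref{utboundstep1} applied to $u$ on $Q_r$ with ratio $\epsilon_1$ to get $\int_{Q_{\epsilon_1 r}}|\nabla^2 u|^2 \le c(n,\epsilon,K)\,r^{-4}\bigl(\int_{Q_r}u^2 + r^2\int_{Q_r}|\nabla u|^2\bigr)$, and similarly Lemma \ref{utboundstep3} applied to $u$ on $Q_r$ with the same ratio to get $\int_{Q_{\epsilon_1 r}}|\nabla \Delta u|^2 \le c(n,\epsilon,K)\,r^{-6}\bigl(\int_{Q_r}u^2 + r^2\int_{Q_r}|\nabla u|^2\bigr)$. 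Substituting back and multiplying through by $r^4$ produces the stated inequality.

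I do not expect a serious obstacle here; the only subtlety is the two-scale choice of radius $\epsilon_1 r$, introduced so that the two applications of the previous lemmas chain together on a common region. A more direct energy-method attempt starting from $\partial_t \int |\nabla \Delta u|^2 \psi^2$ does produce $-2\int |\nabla u_t|^2 \psi^2$ as the leading term, but it also generates a stubborn cross term $-2\int \Delta u_t \langle \nabla \Delta u, \nabla \psi^2\rangle$ involving $\Delta u_t$, for which no pointwise estimate is available from the preceding lemmas. Bootstrapping through Lemma \ref{utboundstep3} applied to $\Delta u$ sidesteps this difficulty entirely, which is why I would take that route.
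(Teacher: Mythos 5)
Your proof is correct and takes essentially the same approach as the paper: observe that $\Delta u$ is also bicaloric, note $\nabla\Delta(\Delta u) = -\nabla u_t$, apply Lemma~\ref{utboundstep3} to $\Delta u$, and then bound the resulting $\int (\Delta u)^2$ and $\int |\nabla\Delta u|^2$ terms via Lemmas~\ref{utboundstep1} and~\ref{utboundstep3}. The only (cosmetic) difference is your explicit intermediate scale $\epsilon_1=(1+\epsilon)/2$, which delivers $Q_{\epsilon r}$ on the left exactly, whereas the paper lands on $Q_{\epsilon^2 r}$ and implicitly relies on $\epsilon$ being arbitrary.
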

\begin{proof}
    We start by noting that because \(\partial_t\) and \(\Delta\) commute, \(\Delta u\) is a solution to the biharmonic heat equation if \(u\) is. Thus by the previous lemmas we have
    \begin{equation}
        \begin{split}
            r^{10}\int_{Q_{\epsilon^2 r}}|\nabla u_t|^2 & = r^{10}\int_{Q_{\epsilon^2 r}}|\nabla \Delta \Delta u|^2 \\ &\le c(n, \epsilon, K)\left(r^4\int_{Q_{\epsilon r}}|\Delta u|^2 + r^6\int_{Q_{\epsilon r}}|\nabla \Delta u|^2\right)\\ & \le c(n, \epsilon, K)\left(\int_{Q_{r}}u^2 + r^2\int_{Q_{r}}|\nabla u|^2\right).
        \end{split}
    \end{equation}
    
\end{proof}
\begin{proof}[Proof of lemma \ref{reversepoincare}]
    The inequality in lemma \ref{reversepoincare} is the sum of the inequalities in lemmas \ref{utboundstep1}, \ref{utboundstep2}, \ref{utboundstep3}, and \ref{utboundstep4} (with the expression \(c(n, \epsilon, K)\) potentially representing different constants each time it appears, as usual).
    
\end{proof}

\section{Bounding the dimension of \(\mathcal{P}_{4k, 4\ell}(M)\)}
\begin{lemma}\label{exponentbound}
     Suppose that \(M\) has polynomial volume growth, i.e. that \(\operatorname{Vol}(B_r) \le C(1 + R)^{d_V}\) with fixed constants \(C, d_V > 0\) for all \(r > 0\). If \(u \in \mathcal{P}_{d, d'}(M)\), then \(\partial_t^k u\) is identically 0 if \(8k > 2d + 2d' + d_V + 6\).
\end{lemma}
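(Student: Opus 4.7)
The plan is to exploit the observation that $\partial_t$ and $\Delta\Delta$ commute, so every iterated time derivative $\partial_t^j u$ is again a bicaloric function, and Lemma~\ref{reversepoincare} can be applied to it in place of $u$. I would introduce the hybrid energy
\[
    J_j(r) := \int_{Q_r} (\partial_t^j u)^2 + r^2 \int_{Q_r} |\nabla \partial_t^j u|^2,
\]
which matches exactly the shape of the right-hand side of the reverse-Poincar\'e inequality. Applying that lemma to $\partial_t^{j-1} u$ yields
\[
    r^8 \int_{Q_{\epsilon r}} (\partial_t^j u)^2 + r^{10} \int_{Q_{\epsilon r}} |\nabla \partial_t^j u|^2 \le c(n,\epsilon,K)\, J_{j-1}(r),
\]
and since $(\epsilon r)^2 \le r^2$, the left-hand side dominates $r^8 J_j(\epsilon r)$, producing the one-step recursion $J_j(\epsilon r) \le c\, r^{-8}\, J_{j-1}(r)$.

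Next I would iterate this $k$ times along the geometric sequence of radii $R, \epsilon R, \epsilon^2 R, \ldots, \epsilon^k R$. The successive factors of $r^{-8}$ telescope into $R^{-8k}$ up to a multiplicative constant depending only on $n, \epsilon, K, k$, giving
\[
    J_k(\epsilon^k R) \le C(n,\epsilon,K,k)\, R^{-8k}\, J_0(R).
\]
The decisive point is that the iteration collapses into an estimate whose right-hand side involves only $u$ and $\nabla u$; no a priori control on the intermediate $\partial_t^j u$ is required.

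To close I would use the polynomial volume growth together with the polynomial pointwise bounds on $|u|$ and $|\nabla u|$ from the definition of $\mathcal{P}_{d,d'}(M)$. Since $Q_R$ has measure at most $C R^4 (1+R)^{d_V}$,
\[
    J_0(R) \le \tilde C\bigl[(1+R)^{2d+d_V+4} + (1+R)^{2d'+d_V+6}\bigr].
\]
Hence $J_k(\epsilon^k R) \to 0$ as $R \to \infty$ provided $8k$ strictly exceeds each of $2d+d_V+4$ and $2d'+d_V+6$; the hypothesis $8k > 2d + 2d' + d_V + 6$, together with $d, d' > 0$, comfortably implies both. Since $\epsilon$ and $k$ are fixed while $\epsilon^k R \to \infty$, this forces $\partial_t^k u \equiv 0$ on every heat cylinder centered at $p$, and since $p \in M$ was arbitrary, $\partial_t^k u \equiv 0$ on all of $M \times (-\infty, 0]$.

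I do not expect a serious obstacle here; the work is essentially bookkeeping. The recursion collapses cleanly only if one commits from the outset to controlling the particular hybrid energy $J_j$ that matches the right-hand side of Lemma~\ref{reversepoincare}, and one must keep careful track of the constant absorbing the $k$-fold iteration of the $\epsilon$-factors. Once those choices are in place, the remainder is a telescoping product followed by the polynomial-growth substitution.
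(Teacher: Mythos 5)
Your proposal is correct and follows essentially the same route as the paper: iterate the reverse-Poincaré inequality $k$ times on the bicaloric functions $\partial_t^j u$ to gain a factor $r^{-8k}$, then combine polynomial volume growth with the pointwise bounds on $|u|$ and $|\nabla u|$ and send $r \to \infty$. Your hybrid-energy notation $J_j$ is a tidy way to organize the iteration, and your remark that the stated exponent condition is a bit stronger than what the telescoping argument actually requires is an accurate observation consistent with the paper's computation.
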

\begin{proof}
    Because \(\partial_t\) and \(\Delta\) commute, \(\partial_t^ju\) solves the biharmonic heat equation for every \(j\). It follows from iterating lemma \ref{reversepoincare} that
    \begin{equation}
        \begin{split}
            \int_{Q_{r/10^k}} |\partial_t^ku|^2 + r^2\int_{Q_{r/10^k}}|\nabla \partial_t^k u|^2 \le \frac{c(n, k, K)}{r^{8k}}\left(\int_{Q_r}u^2 + r^2\int_{Q_r}|\nabla u|^2\right).
        \end{split}
    \end{equation}
    In particular
    \begin{equation}
        \begin{split}
            \int_{Q_{r/10^k}} |\partial_t^ku|^2 &\le \frac{c(n, k, K)}{r^{8k}}\int_{Q_r}u^2 + \frac{c(n,k, K)}{r^{8k - 2}}\int_{Q_r}|\nabla u|^2 \\ & \le c(n,k, K)\left(\operatorname{Vol}(B_r)r^4\right)\left(r^{- 8k}\sup_{Q_r}u^2 + r^{2 - 8k}\sup_{Q_r}|\nabla u|^2\right) \\ & \le c(n, k, K, u, M)r^{4 - 8k}(1 + r)^{d_V}\left((1 + r)^{2d} + r^{2}(1 + r)^{2d'}\right).
        \end{split}
    \end{equation}
    When \(8k > 2d + 2d' + d_V + 6\), taking the limit as \(r \rightarrow \infty\) shows that \(\partial_t^ku\) must be identically 0.
    
\end{proof}
\begin{lemma}\label{coefficientbound}
    Suppose \(u \in \mathcal{P}_{4k, 4\ell}(M)\). Let \(d = \min\{k, \ell + 1\}\). Then \(u\) can be written as \(u = p_0(x) + tp_1(x) + \cdots + t^dp_d(x)\), with
    \begin{equation}\label{pde for pj statement in lemma}
        \Delta\Delta p_{d} = 0 \quad \text{and} \quad \Delta\Delta p_j = -(j+1)p_{j + 1} \; (j < d).
    \end{equation}
    Furthermore,
    \begin{equation}\label{coefficients in polynomial expression bounds}
        |p_j(x)| \le C_j(1 + |x|)^{4(k - j)}, \quad |\nabla p_j(x)| \le C_j'(1 + |x|)^{4(\ell - j)} \quad (j < \ell + 1), \quad \nabla p_{\ell + 1}(x) = 0.
    \end{equation}
\end{lemma}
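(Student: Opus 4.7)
My plan is to use Lemma \ref{exponentbound} to reduce $u$ to a polynomial in $t$, then extract pointwise bounds on the spatial coefficients by a Chebyshev-type interpolation argument, and finally combine the two growth hypotheses to pin down the exact degree $d = \min\{k, \ell + 1\}$.

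The first step should be straightforward: Lemma \ref{exponentbound} supplies some finite $N$ with $\partial_t^N u \equiv 0$, and since $u$ is smooth and defined on $(-\infty, 0]$, for each fixed $x$ the map $t \mapsto u(x, t)$ must be a polynomial of degree at most $N - 1$. Writing $u(x, t) = \sum_{j = 0}^{N - 1} t^j p_j(x)$ and plugging into $\partial_t u + \Delta\Delta u = 0$, matching powers of $t$ immediately yields $\Delta\Delta p_{N - 1} = 0$ together with the recurrence $\Delta\Delta p_j = -(j + 1) p_{j + 1}$ for $0 \le j \le N - 2$, which is exactly (\ref{pde for pj statement in lemma}) once the expansion is truncated at the true degree.

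Next I would extract the coefficient bounds. For any fixed $x$ with $|x| \le R$, the function $q(t) := u(x, t)$ is a polynomial in $t$ of degree at most $N - 1$ with $|q(t)| \le C(1 + R)^{4k}$ on $[-R^4, 0]$ by (\ref{definition of polynomially bounded growth in introduction}). A standard Vandermonde inversion, carried out first on the fixed reference interval $[-1, 0]$ and then rescaled via $t \mapsto R^4 t$, gives $|p_j(x)| \le c_{j, N}(1 + R)^{4k} R^{-4j}$ with constants depending only on $j$ and $N$. The same argument applied componentwise to $\nabla u(x, t) = \sum_j t^j \nabla p_j(x)$ produces $|\nabla p_j(x)| \le c'_{j, N}(1 + R)^{4\ell} R^{-4j}$. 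Choosing $R = |x| + 1$ yields the claimed bounds in (\ref{coefficients in polynomial expression bounds}) for $j \le k$ and $j \le \ell$ respectively, while sending $R \to \infty$ at any fixed $x$ kills $p_j$ for $j > k$ and kills $\nabla p_j$ for $j > \ell$ (so $p_j$ is a constant whenever $j > \ell$).

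Finally I would combine these two conclusions to identify the sharp degree. If $k \le \ell + 1$, the bound $p_j \equiv 0$ for $j > k$ already gives degree at most $k = d$. If instead $k > \ell + 1$, then for each $j \ge \ell + 1$ the coefficient $p_j$ is constant, hence $\Delta\Delta p_j = 0$, and the recurrence forces $p_{j + 1} = 0$; iterating gives $p_j \equiv 0$ for $j \ge \ell + 2$, so the degree is at most $\ell + 1 = d$. In either case $p_{\ell + 1}$, if it is present, is constant, giving $\nabla p_{\ell + 1} = 0$ as claimed. The main obstacle will be making the Vandermonde estimate genuinely uniform in $R$ and $x$; handling this by first rescaling onto $[-1, 0]$ sidesteps the issue, and everything else reduces to bookkeeping of powers of $R$ and matching coefficients in $t$.
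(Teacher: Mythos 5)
Your proposal is correct and follows essentially the same route as the paper: Lemma \ref{exponentbound} to reduce $u$ to a polynomial in $t$, matching powers of $t$ to derive the recurrence (\ref{pde for pj statement in lemma}), and a rescaled Vandermonde inversion on $[-R^4,0]$ to extract the pointwise coefficient bounds (\ref{coefficients in polynomial expression bounds}). The one organizational difference is that the paper first kills $p_j$ for $j > k$ and $\nabla p_j$ for $j > \ell$ via a separate asymptotic argument as $t \to -\infty$ before doing the interpolation, whereas you read both the vanishing and the quantitative bounds directly off the Vandermonde estimate by sending $R \to \infty$ — a harmless streamlining.
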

\begin{proof}
    As in lemma \ref{exponentbound}, choose \(8m > 8k + 8\ell + d_V + 6\). Then \(\partial_t^mu = 0\) for any \(u \in \mathcal{P}_{k, \ell}(M)\). It follows that for any \(d > m\), we can write
    \begin{equation}
        \begin{split}
            u(x, t) = p_0(x) + tp_1(x) + \ldots + t^{d}p_{d}(x).
        \end{split}
    \end{equation}
    We now refine this to \(d \ge k\). Fix an arbitrary \(x \in M\). Then for each \(j\) as \(t \rightarrow -\infty\) we have
    \begin{equation}\label{initial bound on p and nabla p}
        |u(x, t)| \ge O(|p_j(x)t^j|) \quad \text{and} \quad |\nabla u(x, t)| \ge O(|\nabla p_{j}(x)t^{j}|).
    \end{equation}
    The polynomial growth bounds on \(u\) and \(\nabla u\) show that we must have \(p_j(x) = 0\) if \(j > k\) and \(\nabla p_j(x) = 0\) if \(j > \ell\).

    It follows that we can take \(d \ge k\) in our expression \(u\) (with some of the coefficient functions possibly being zero).

    We now show equation (\ref{pde for pj statement in lemma}) and use it to show \(d \ge \ell + 1\). Because \(u\) satisfies the biharmonic heat equation, we have
    \begin{equation}\label{pde for pj derivation}
        \begin{split}
            \partial_tu + \Delta\Delta u & = jt^{j - 1}p_j(x) + t^{j}\Delta\Delta p_j(x) \\ & = t^{j - 1}(jp_j + \Delta\Delta p_{j - 1}) \\ & = 0,
        \end{split}
    \end{equation}
    so that \(\Delta\Delta p_{d} = 0\) and for \(j < d\)
    \begin{equation}\label{pde for pj}
        \Delta\Delta p_j = -(j+1)p_{j + 1}.
    \end{equation}
    From equation (\ref{initial bound on p and nabla p}) we deduced that \(\nabla p_j = 0\) if \(j > \ell\). Thus, \(p_j\) is constant for \(j > \ell\). It now follows from equation (\ref{pde for pj}) that if \(j \ge \ell + 2\), then \(p_j = 0\). Thus, we can take \(d \ge \ell + 1\). In particular, if \(j \ge \min\{k, \ell + 1\}\), then \(p_j(x) = 0\), and so we fix \(d = \min\{k, \ell + 1\}\).

    Now we show equation (\ref{coefficients in polynomial expression bounds}). We first show that the \(p_j\) grow polynomially of degree at most \(4k\) and the \(\nabla p_j\) grow polynomially of degree at most \(4\ell\). Fix distinct numbers \(-1 < t_1 < \cdots < t_{d + 1} < -1/2\). We claim that the vectors
    \begin{equation}
        (1, t_i, t_i^2, \ldots, t_i^{d})
    \end{equation}
    are linearly independent in \(\mathbb{R}^{d + 1}\). If they were not, then they would lie in a strict subspace of \(\mathbb{R}^{d + 1}\) and there would thus be a vector \((a_0, \ldots, a_{d})\) orthogonal to all of them. That is,
    \begin{equation}
        a_0 + a_1t_i + a_2t_i^2 + \cdots + a_{d + 1}t_i^{d} = 0
    \end{equation}
    for \(i = 1, \ldots, d+ 1\). But a polynomial of degree \(d\) can have at most \(d\) distinct roots, so this is a contradiction.

    Since there are \(d + 1\) vectors \((1, t_i, \ldots, t_i^{d + 1})\), they span \(\mathbb{R}^{d + 1}\), and so there are constants \(b_i^j\) such that
    \begin{equation}\label{bdefinitions}
        e_j = b_i^j(1, t_i, \ldots, t_i^{d}).
    \end{equation}
    It now follows that
    \begin{equation}\label{expression of p in b}
        p_j(x) = b_i^ju(x, t_i), \quad \nabla p_j(x) = b_i^j\nabla u(x, t_i),
    \end{equation}
    and we conclude that \(p_j\) can grow at most polynomially of degree \(4k\) and \(\nabla p_j\) can grow at most polynomially of degree \(4\ell\). Because \(p_j\) vanishes when \(j > k\) and \(\nabla p_j\) vanishes when \(j > \ell + 1\), we have
    \begin{equation}
        u = p_0 + t p_1 + \ldots + t^{k}p_{k} \quad \text{and} \quad \nabla u = \nabla p_0 + t\nabla p_1 + \ldots + t^{\ell}\nabla p_{\ell},
    \end{equation}
    it follows that
    \begin{equation}
        \begin{split}
            |u(x, t)| \le C(1 + |t|^k + |x|^{4k}) \quad \text{and} \quad |\nabla u(x, t)| \le C(1 + |t|^{\ell} + |x|^{4\ell})
        \end{split}
    \end{equation}
    From equation (\ref{bdefinitions}) we have
    \begin{equation}
        \begin{split}
            \sum_i b_i^ju(x, R^4t_i) & = \sum_i\sum_{m}b_i^jp_{m}(x)R^{4j}t_i^{m} = R^{4j}\sum_mp_m(x)\left(\sum_ib_{i}^jt_i^m\right) \\ & = \sum R^{4j}\sum_m p_m(x)\delta_{mj} \\ & = R^{4j}p_j(x).
        \end{split}
    \end{equation}
    Similarly,
    \begin{equation}
        \sum_i b_i^j\nabla u(x, R^4t_i) = R^{4j}\nabla p_j(x)
    \end{equation}
    Thus
    \begin{equation}
        \begin{split}
            |R^{4j}p_j(x)| & = \left|\sum_i b_i^ju(x, R^4t_i)\right| \le A\sum_i\left|u(x, R^4t_i)\right| \\ & \le A(1 + |x|^{4k} + \sum_i|Rt_i|^{4d}) \le AR^{4k}
        \end{split}
    \end{equation}
    and similarly
    \begin{equation}
        \begin{split}
            |R^{4j}\nabla p_j(x)| & \le A'(1 + |x|^{4\ell} + \sum_i|Rt_i|^{4\ell}) \le A'R^{4\ell},
        \end{split}
    \end{equation}
    so that \(|p_j(x)| \le A_jR^{4(k - j)}\) and \(|p_j(x)| \le A_j'R^{4(\ell - j)}\).
\end{proof}
\begin{proof}[Proof of Theorem \ref{dimension bound theorem}]
    Choose some \(u \in {P}_{4k, 4\ell}(M)\) and suppose \(u = p_0(x) + tp_1(x) + \cdots + t^dp_d(x)\), as in lemma \ref{coefficientbound}. Then \(\Delta\Delta p_{d} = 0\) and for \(j < d\), \(\Delta\Delta p_j = -(j+1)p_{j + 1}\). Thus, there a linear map \(\Psi_0: \mathcal{P}_{4k, 4\ell} \rightarrow \mathcal{H}_{4k, 4\ell}\) defined by \(\Psi_0u = p_{d}\) (here we use the coefficient estimate in (\ref{coefficients in polynomial expression bounds})). If we let \(\mathcal{K}_0 = \ker \Psi_0\) we find
    \begin{equation}
        \dim \mathcal{P}_{4k, 4\ell} \le \dim \mathcal{K}_0 + \dim \mathcal{H}_{4k, 4\ell}
    \end{equation}
    If \(u \in \mathcal{K}_0\), then \(p_{d} = 0\) and \(\Delta\Delta p_{d - 1} = -dp_{d} = 0\), and so we have a map \(\Psi_1: \mathcal{K}_0 \rightarrow \mathcal{H}_{4(k - 1), 4(\ell - 1)}\) defined by setting \(\Psi_1u = p_{d - 1}\). Letting \(\ker \Psi_1 = \mathcal{K}_1\) then
    \begin{equation}
        \dim \mathcal{K}_0 \le \dim \mathcal{K}_1 + \dim \mathcal{H}_{4k, 4\ell}
    \end{equation}
    When \(k \le \ell + 1\), we can repeat this \(k\) times to get
    \begin{equation}
        \dim \mathcal{P}_{4k, 4\ell}(M) \le \sum_{i = 0}^{k} \dim \mathcal{H}_{4(k - i), 4(\ell - i)}(M)
    \end{equation}
    When \(k > \ell + 1\), we have from lemma \ref{coefficientbound} that \(\nabla p_{\ell + 1} = 0\), and so \(p_{\ell + 1} = p_d\) is a constant. Thus in this case \(p_d\) lies in a one dimensional subspace of \(H_{4k, 4\ell}(M)\), so that
    \begin{equation}
        \dim \mathcal{P}_{4k, 4\ell} = \dim \mathcal{K}_0 + 1.
    \end{equation}
    We then iterate the same argument as before, \(\ell\) times, to get the second inequality in (\ref{equation in the first statement of the main theorem}).
    
\end{proof}

\subsection{Biharmonic Polynomials in \(\mathbb{R}^n\)}
Our goal now is to show that the inequalities in Theorem \ref{dimension bound theorem} are sharp in \(\mathbb{R}^n\). We start by showing that the solutions \(u : \mathbb{R}^n \rightarrow \mathbb{R}\) to the biharmonic equation \(\Delta\Delta u = 0\) are polynomials. We start by showing what is essentially a reverse-Poincaré inequality for biharmonic functions:

\begin{lemma}\label{reverse poincare for just biharmonic equation}
    Let \(M\) be a manifold with \(\Ric\) bounded below quadratically with constant \(K\) and suppose \(\Delta\Delta u = 0\). Then for \(\epsilon \in (0, 1)\) and \(r > 0\).
    \begin{equation}
        \begin{split}
            r^4\left(\int_{B_{\epsilon r}(p)}|\nabla^2u|^2 + r^2\int_{B_{\epsilon r}(p)}|\nabla \Delta u|^2\right) \le c(n, \epsilon, K)\left(\int_{B_r(p)}u^2 + r^2\int_{B_r(p)}|\nabla u|^2\right)
        \end{split}
    \end{equation}
\end{lemma}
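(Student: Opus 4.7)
The plan is to imitate the proofs of Lemmas \ref{utboundstep1} and \ref{utboundstep3}, with the significant simplification that the time variable drops out entirely: there is no $\partial_t\int u^2\psi^2$ term to track, no $u_t$ to estimate, and no appeal to the mean value theorem in $t$. The statement splits into a Hessian bound and a $|\nabla\Delta u|^2$ bound, and these should be proved in that order.

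For the Hessian bound, I would fix $R>0$ and a cutoff $\psi$ supported in $B_{aR}$ with $|\psi|\le 1$, $|\nabla\psi|\le c/R$, $|\Delta\psi|\le c/R^2$. Starting from $\Delta\Delta u=0$, the identity
\begin{equation*}
0=\int u\psi^2\,\Delta\Delta u = \int\Delta(u\psi^2)\,\Delta u = \int\psi^2(\Delta u)^2 + \int u\,\Delta u\,\Delta\psi^2 + 2\int\Delta u\langle\nabla u,\nabla\psi^2\rangle
\end{equation*}
replaces the role played by $\partial_t\int u^2\psi^2 = -2\int u\psi^2\Delta\Delta u$ in Lemma \ref{utboundstep1}. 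Combining this with an integration by parts of $\int\psi^2\langle\nabla\Delta u,\nabla u\rangle$ and the Bochner formula
\begin{equation*}
\tfrac{1}{2}\Delta|\nabla u|^2 = |\nabla^2 u|^2 + \langle\nabla\Delta u,\nabla u\rangle + \Ric(\nabla u,\nabla u),
\end{equation*}
together with the quadratic lower bound on $\Ric$, yields an identity of the form
\begin{equation*}
2\int\psi^2|\nabla^2 u|^2 = \int|\nabla u|^2\Delta\psi^2 - 2\int u\,\Delta u\,\Delta\psi^2 - 2\int\Delta u\langle\nabla u,\nabla\psi^2\rangle - 2\int\psi^2\,\Ric(\nabla u,\nabla u).
\end{equation*}
Now I would apply the absorbing inequality to the cross terms exactly as in (\ref{initial integration by parts}), using $|\Delta u|^2\le n|\nabla^2 u|^2$ to absorb $|\Delta u|^2\psi^2$ back into the left-hand side, and choose the absorbing constants so that the remaining contributions are bounded by $c(n,a,K)R^{-4}\int_{B_R}u^2 + c(n,a,K)R^{-2}\int_{B_R}|\nabla u|^2$. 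Replacing $R$ by a suitable $r_1\in[\epsilon r/a, r]$ and multiplying by $r^4$ gives the first piece of the lemma.

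For the $|\nabla\Delta u|^2$ bound, exactly as in the first display of Lemma \ref{utboundstep2}, integration by parts gives
\begin{equation*}
\int\psi^2|\nabla\Delta u|^2 = -\int\psi^2(\Delta\Delta u)(\Delta u) - 2\int\psi\,\Delta u\,\langle\nabla\Delta u,\nabla\psi\rangle,
\end{equation*}
and here the first term vanishes because $\Delta\Delta u=0$. An absorbing estimate on the second term and $|\Delta u|^2\le n|\nabla^2 u|^2$ then yield $\int\psi^2|\nabla\Delta u|^2 \le c(n)\int|\nabla^2 u|^2|\nabla\psi|^2$. Choosing $\psi$ supported in a slightly larger ball than $B_{\epsilon r}$ and with $|\nabla\psi|\le c/r$, and then invoking the Hessian bound from the previous step on that slightly larger ball, controls $r^6\int_{B_{\epsilon r}}|\nabla\Delta u|^2$ by the right-hand side. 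Summing the two estimates completes the proof.

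The bulk of the work is bookkeeping; there is no real obstacle, since all the hard ideas (Bochner, absorbing, and the grouping of $\psi$-dependent coefficients) are already established in Section~2. The only mild subtlety is the elliptic analog of the parabolic mean value step: instead of extracting a good time slice from $[-r^4,0]$, I simply use a two-scale cutoff on concentric balls $B_{\epsilon r}\subset B_{a r_1}\subset B_{r_1}\subset B_r$, absorbing the geometric factors into $c(n,\epsilon,K)$.
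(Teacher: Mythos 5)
Your argument is correct and would work, but it takes the more labor-intensive of the two routes the paper mentions. The paper's primary proof is a one-liner: since $u$ is time-independent and $\Delta\Delta u = 0$, the function $u(x,t) := u(x)$ is an ancient bicaloric function, so Lemma~\ref{reversepoincare} applies directly; both sides of that parabolic inequality pick up a factor of $r^4$ from the trivial $t$-integral over $[-(\epsilon r)^4, 0]$ (resp.\ $[-r^4,0]$), the $u_t$ terms on the left vanish identically, and after cancelling the common factor and absorbing $\epsilon^4$ into the constant one gets exactly the stated elliptic inequality. What you have written out is the paper's acknowledged alternative: re-run the elliptic version of the Bochner/absorbing argument from scratch. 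Your computations are right --- the identity $2\int\psi^2|\nabla^2 u|^2 = \int|\nabla u|^2\Delta\psi^2 - 2\int u\Delta u\Delta\psi^2 - 2\int\Delta u\langle\nabla u,\nabla\psi^2\rangle - 2\int\psi^2\Ric(\nabla u,\nabla u)$ is the correct elliptic analogue of the display in Lemma~\ref{utboundstep1}, and the absorption $\int\psi^2|\nabla\Delta u|^2 \le 4\int|\Delta u|^2|\nabla\psi|^2 \le 4n\int|\nabla^2 u|^2|\nabla\psi|^2$ is exactly what the vanishing of $\Delta\Delta u$ buys you in the second step. One small simplification worth noting: for the Hessian bound, the intermediate radius $r_1$ and the two-scale cutoff are unnecessary --- in the elliptic case there is no boundary term from integrating in $t$, so there is nothing for a mean-value step to handle, and a single cutoff equal to $1$ on $B_{\epsilon r}$, supported in $B_r$, with $|\nabla\psi|\le c(\epsilon)/r$, suffices. (The two concentric scales are genuinely needed only when you chain the $|\nabla\Delta u|^2$ bound through the Hessian bound.) The direct route has the pedagogical merit of showing the elliptic mechanism in isolation; the paper's route has the merit of avoiding any new computation.
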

\begin{proof}
    One proof is to note that the function \(u(x, t) = u(x)\) is indeed an ancient solution to \(\partial_tu + \Delta^2u = 0\), and then the result follows from lemma \ref{reversepoincare}. One can also use integration by parts, the Bochner formula, and the absorbing inequality as in our previous lemmas (similarly to how one would prove reverse-Poincaré for harmonic functions).
    
\end{proof}

\begin{proposition}\label{polynomially bounded in Rn makes you a polynomial}
    Let \(u \in \mathcal{H}_{d, d'}(\mathbb{R}^n)\). Then \(u\) is a polynomial.
\end{proposition}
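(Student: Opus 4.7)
The plan is to reduce the claim to the classical Liouville theorem for harmonic functions on $\mathbb{R}^n$. Since $\Delta\Delta u = 0$, the function $\Delta u$ is harmonic, so if I can show that $\Delta u$ has polynomial growth then $\Delta u$ must itself be a polynomial $q$: on $\mathbb{R}^n$, polynomially bounded harmonic functions are harmonic polynomials, by iterating the Cauchy-type derivative estimate $|\nabla^k h(p)| \le C_k r^{-k}\sup_{B_r(p)}|h|$ and letting $r \to \infty$. Once $\Delta u = q$, I would choose a polynomial $v$ with $\Delta v = q$, which exists because $\Delta$ maps polynomials of degree at most $m+2$ onto polynomials of degree at most $m$. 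Then $u - v$ is harmonic and still has polynomial growth, hence is a harmonic polynomial, and so $u = v + (u - v)$ is a polynomial.

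The heart of the argument is the polynomial pointwise bound on $\Delta u$, which I would get by combining Lemma \ref{reverse poincare for just biharmonic equation} with the subharmonicity of $|\Delta u|^2$. Fix $x \in \mathbb{R}^n$ and set $r = 1 + |x|$. Applying Lemma \ref{reverse poincare for just biharmonic equation} centered at $x$ with $\epsilon = 1/2$ gives
\begin{equation*}
\int_{B_{r/2}(x)} |\nabla^2 u|^2 \le \frac{c}{r^4}\int_{B_r(x)} u^2 + \frac{c}{r^2}\int_{B_r(x)} |\nabla u|^2.
\end{equation*}
The hypothesis $u \in \mathcal{H}_{d, d'}(\mathbb{R}^n)$ together with $\operatorname{Vol}(B_r(x)) = \omega_n r^n$ bounds $\int_{B_r(x)} u^2 \le C r^{n+2d}$ and $\int_{B_r(x)} |\nabla u|^2 \le C r^{n+2d'}$ for our choice of $r$. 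Since $\Delta u$ is harmonic, $|\Delta u|^2$ is subharmonic, so the mean value inequality together with the pointwise estimate $|\Delta u|^2 \le n|\nabla^2 u|^2$ gives
\begin{equation*}
|\Delta u(x)|^2 \le \frac{n}{\omega_n (r/2)^n}\int_{B_{r/2}(x)}|\nabla^2 u|^2 \le C\bigl((1+|x|)^{2d-4} + (1+|x|)^{2d'-2}\bigr),
\end{equation*}
showing that $\Delta u$ is polynomially bounded.

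The main obstacle is really this passage from the $L^2$ reverse-Poincar\'e bound on $|\nabla^2 u|^2$ to a pointwise bound on $\Delta u$. Biharmonic functions do not themselves satisfy a mean value property, so there is no direct route from $L^2$ control of $u$ to pointwise control; what rescues us is exactly that $\Delta u$ is harmonic, so $|\Delta u|^2$ is subharmonic and the mean value inequality applies. Once the pointwise bound on $\Delta u$ is in hand, the remaining steps --- solving $\Delta v = q$ in polynomials and invoking Liouville on the polynomially bounded harmonic function $u - v$ --- are routine.
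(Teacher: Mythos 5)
Your proof is correct, and it takes a genuinely different route from the paper's. The paper proves this proposition by iterating the reverse-Poincar\'e estimate of Lemma~\ref{reverse poincare for just biharmonic equation} on the biharmonic functions $\partial_{x_i}^k u$ directly: each iteration trades two derivatives for a gain of $r^{-4}$, and once $k$ is large enough the right-hand side tends to zero as $r \to \infty$, forcing $\nabla^2 \partial_{x_i}^k u \equiv 0$ and hence $\partial_{x_i}^K u \equiv 0$ for some $K$; this works entirely at the $L^2$ level and never passes to a pointwise bound. Your argument instead uses the reverse-Poincar\'e estimate only once, to get $L^2$ control of $\nabla^2 u$ on $B_{r/2}(x)$, and then converts it to a pointwise polynomial bound on $\Delta u$ via the subharmonicity of $|\Delta u|^2$ and the mean value inequality. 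From there you decompose the problem: $\Delta u$ is harmonic of polynomial growth, hence a polynomial $q$ by the classical Liouville theorem; picking a polynomial $v$ with $\Delta v = q$ (which exists by the surjectivity of $\Delta$ on polynomials, essentially Lemma~\ref{colding minicozzi lemma}) reduces to $u - v$ being a polynomially bounded harmonic function, hence a polynomial. Your route leans more heavily on the Euclidean structure --- it exploits the mean value property, which is exactly what the paper's more robust $L^2$-only iteration avoids --- but in return it offloads almost all the work onto the classical harmonic Liouville theorem, which is arguably cleaner and makes the ``why is it a polynomial'' step completely explicit. Both are valid; the paper's version is more uniform with the rest of its energy-method framework, while yours gives a shorter path in the special case $M = \mathbb{R}^n$ where that framework is not really needed.
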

\begin{proof}
    Since the coordinate derivatives \(\partial_{x_i}\) commute with \(\Delta\) in \(\mathbb{R}^n\), we have for all \(k > 0\)
    \begin{equation}
        \begin{split}
            \int_{B_{\epsilon r}(p)}|\nabla^2 \partial_{x_i}^ku|^2 + r^2\int_{B_{\epsilon r}(p)}|\nabla \Delta \partial_{x_i}^ku|^2 & \le \frac{c(n)}{r^{4k}}\left(\int_{B_r(p)}u^2 + r^2\int_{B_r(p)}|\nabla u|^2\right) \\ & \le \frac{c(n)}{r^{4k}}\left(Cr^n(1 + r)^{2d} + C'r^{n + 2}(1 + r)^{2d'}\right)
        \end{split}
    \end{equation}
    For \(4k > 2n + 2d + 2d' + 2\), the quantity on the right hand side goes to \(0\) as \(r \rightarrow \infty\). Thus, there is some \(K\) so that for \(k \ge K\) we have \(\partial_{x_i}^k u = 0\) everywhere. We can carry out this argument for any \(x_i\), and so we conclude that \(u\) is a polynomial.
\end{proof}
\begin{corollary}
    If \(u \in \mathcal{H}_{k, \ell}(\mathbb{R}^n)\), then there is some \(d\) such that \(u \in \mathcal{H}_{d, d-1}(\mathbb{R}^n)\).
\end{corollary}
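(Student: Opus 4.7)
The plan is to simply apply the preceding proposition, which gives that $u$ is a polynomial, and then read off the sharp growth rates from the polynomial structure. First I would invoke Proposition \ref{polynomially bounded in Rn makes you a polynomial} to conclude that $u$ is a polynomial on $\mathbb{R}^n$. Let $d$ denote its total degree (if $u \equiv 0$ the statement is vacuous, so we may assume $d \ge 0$).

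Next I would verify that this $d$ gives the desired membership $u \in \mathcal{H}_{d, d-1}(\mathbb{R}^n)$. A polynomial of degree $d$ satisfies $|u(x)| \le C(1+|x|)^d$ for some constant $C$ depending on its coefficients, and by the triangle inequality
\begin{equation*}
\sup_{B_R(p)}|u| \le C(1 + |p| + R)^d \le C_p(1+R)^d
\end{equation*}
for any fixed $p$, which is exactly the first bound defining $\mathcal{H}_{d, \cdot}$. Since $\nabla u$ is a vector of polynomials of degree at most $d - 1$, the analogous bound $\sup_{B_R(p)}|\nabla u| \le C'_p(1+R)^{d-1}$ holds and gives the second bound with exponent $d-1$. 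Together with $\Delta \Delta u = 0$ this places $u$ in $\mathcal{H}_{d, d-1}(\mathbb{R}^n)$.

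There is no real obstacle here: the work has already been done in Proposition \ref{polynomially bounded in Rn makes you a polynomial}, and the corollary is essentially the observation that for polynomials the natural growth exponent of $|\nabla u|$ is one less than that of $|u|$, regardless of the (possibly non-sharp) exponents $k, \ell$ with which $u$ was originally presented.
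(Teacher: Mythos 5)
Your proof is correct and takes essentially the same route as the paper: invoke Proposition \ref{polynomially bounded in Rn makes you a polynomial} to conclude that $u$ is a polynomial, and then observe that its gradient is a polynomial of one degree lower. The paper's proof is a one-line version of exactly this observation; you have simply spelled out the elementary growth estimates.
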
\label{fewer spaces in R^n}
\begin{proof}
    This follows instantly from the fact that a polynomial's derivative in \(\mathbb{R}^n\) grows polynomially of one degree lower.
    
\end{proof}

We will now discuss in more detail the biharmonic polynomials in \(\mathbb{R}^n\). Due to corollary \ref{fewer spaces in R^n} we can consider just the spaces \(\mathcal{H}_{d, d-1}(\mathbb{R}^n)\). Following \cite{colding2021optimal}, let \(A_j^n\) be the set of homogeneous polynomials in \(\mathbb{R}^n\) of degree \(j\). Then \(\Delta: A_j^n \rightarrow A_{j - 2}^n\) is a linear map. From \cite{colding2021optimal} we have
\begin{lemma}\label{colding minicozzi lemma}
    For each \(d\), the map \(\Delta: A_{d + 2}^n \rightarrow A_d^n\) is onto.
\end{lemma}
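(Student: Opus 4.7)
The plan is to prove surjectivity by identifying the transpose of $\Delta$ under a convenient inner product and showing that transpose is injective. Since both $A_{d+2}^n$ and $A_d^n$ are finite-dimensional, surjectivity of $\Delta: A_{d+2}^n \to A_d^n$ is equivalent to injectivity of its adjoint $\Delta^*: A_d^n \to A_{d+2}^n$ with respect to any chosen inner products on the two spaces.

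The right inner product to use is the Fischer (apolar) inner product: for $p = \sum_\alpha a_\alpha x^\alpha$ and $q = \sum_\alpha b_\alpha x^\alpha$ homogeneous of the same degree, set $\langle p, q\rangle = [p(\partial) q](0) = \sum_\alpha \alpha!\, a_\alpha b_\alpha$. The key property, which follows from a one-line check on monomials via integration by parts of the formal differential operator $p(\partial)$, is that for any polynomials $p, q, r$ of matching degrees one has $\langle x_i p, q\rangle = \langle p, \partial_{x_i} q\rangle$, i.e. multiplication by $x_i$ is adjoint to $\partial_{x_i}$. Taking sums of squares, the adjoint of $\Delta = \sum_i \partial_{x_i}^2$ on homogeneous polynomials is multiplication by $|x|^2 = \sum_i x_i^2$, which sends $A_d^n$ into $A_{d+2}^n$.

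With this in hand, surjectivity of $\Delta$ reduces to the assertion that the map $M_{|x|^2}: A_d^n \to A_{d+2}^n$, $p \mapsto |x|^2 p$, is injective. But the polynomial ring $\mathbb{R}[x_1, \ldots, x_n]$ is an integral domain and $|x|^2$ is a nonzero element, so multiplication by $|x|^2$ is injective on all of $\mathbb{R}[x_1, \ldots, x_n]$, hence a fortiori on the graded piece $A_d^n$. This finishes the proof.

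There is no real obstacle; the only subtle point is locating the correct pairing that makes $\partial_{x_i}$ and $x_i$ adjoints. One could instead argue by induction on $d$, or by using the classical decomposition $A_{d+2}^n = \mathcal{H}_{d+2} \oplus |x|^2 A_d^n$ of homogeneous polynomials into harmonic polynomials plus a multiple of $|x|^2$, but this decomposition itself is typically proved via the Fischer inner product argument above, so it is cleaner to go directly.
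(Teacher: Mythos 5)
Your proof is correct. The key observation is right: endow each graded piece $A_j^n$ with the Fischer (apolar) inner product $\langle p,q\rangle = [p(\partial)q](0)$, under which multiplication by $x_i$ and differentiation $\partial_{x_i}$ are adjoint, so that the adjoint of $\Delta: A_{d+2}^n \to A_d^n$ is the multiplication map $p\mapsto |x|^2p$ from $A_d^n$ to $A_{d+2}^n$. Injectivity of the latter is immediate because $\mathbb{R}[x_1,\dots,x_n]$ is a domain, and in finite dimensions injectivity of the adjoint gives surjectivity of $\Delta$. Every step checks out, and the argument is self-contained.

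Note, however, that the paper does not prove this lemma at all --- it simply imports it from \cite{colding2021optimal} with the line ``From \cite{colding2021optimal} we have.'' So there is no in-text proof to compare against; your argument fills a gap the paper leaves to the reference. For context, the proof in the cited Colding--Minicozzi paper runs along related but slightly different lines: one computes $\Delta(|x|^2 p) = |x|^2\Delta p + (2n+4d)p$ for $p\in A_d^n$ (via the product rule and Euler's identity), and then shows that $p\mapsto \Delta(|x|^2 p)$ is an injective, hence bijective, endomorphism of $A_d^n$, which yields surjectivity of $\Delta$ without naming an inner product. Your Fischer-product route is the classical apolarity argument behind the decomposition $A_{d+2}^n = (\ker\Delta\cap A_{d+2}^n)\oplus |x|^2 A_d^n$; it is arguably cleaner because the duality does all the work and no explicit commutator computation is required, at the cost of introducing the (standard but perhaps unfamiliar) pairing. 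Either is a perfectly good proof to include if one wanted the paper to be self-contained here.
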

\begin{lemma}\label{dim for biharmonic polynomials}
    Consider the map \(\Delta\Delta: A_{d + 4}^n \rightarrow A_d^n\) for \(d > 0\). Let \(B_d^n\) be the kernel of \(\Delta\Delta: A_d^n \rightarrow A_{d - 4}^n\). For each \(d > 0\), \(\Delta\Delta: A_{d + 4}^n \rightarrow A_d^n\) is onto, \(\dim B_d^n = \dim A_d^n - A_{d - 4}^n\), and
    \begin{equation}
        \dim \mathcal{H}_{d, d-1}(\mathbb{R}^n) = \dim A_d^n + \dim A_{d - 1}^n + \dim A_{d - 2}^n + \dim A_{d - 3}^n
    \end{equation}
\end{lemma}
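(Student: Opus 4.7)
The lemma has three parts (surjectivity of $\Delta\Delta$, a formula for $\dim B_d^n$, and the closed-form dimension of $\mathcal{H}_{d,d-1}(\mathbb{R}^n)$), and each follows by a different kind of linear algebra built on top of what is already proved. The plan is to deduce surjectivity from lemma \ref{colding minicozzi lemma} by composition, get the dimension of $B_d^n$ from rank-nullity, and then identify $\mathcal{H}_{d,d-1}(\mathbb{R}^n)$ with the direct sum $\bigoplus_{j=0}^{d} B_j^n$ using Proposition \ref{polynomially bounded in Rn makes you a polynomial} so that the formula for $\dim \mathcal{H}_{d,d-1}(\mathbb{R}^n)$ becomes a telescoping sum.

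\textbf{Surjectivity and $\dim B_d^n$.} Given $p \in A_d^n$, lemma \ref{colding minicozzi lemma} produces $q \in A_{d+2}^n$ with $\Delta q = p$ and then $r \in A_{d+4}^n$ with $\Delta r = q$, so $\Delta\Delta r = p$; hence $\Delta\Delta : A_{d+4}^n \to A_d^n$ is onto. Applied with $d$ replaced by $d-4$ (for $d \ge 4$), this also says $\Delta\Delta : A_d^n \to A_{d-4}^n$ is onto, and the rank-nullity theorem then gives $\dim B_d^n = \dim A_d^n - \dim A_{d-4}^n$; for $d < 4$ we have $A_{d-4}^n = 0$ so $B_d^n = A_d^n$ and the same formula holds by the convention $\dim A_j^n = 0$ for $j<0$.

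\textbf{Identifying $\mathcal{H}_{d,d-1}(\mathbb{R}^n)$.} Let $u \in \mathcal{H}_{d,d-1}(\mathbb{R}^n)$. Proposition \ref{polynomially bounded in Rn makes you a polynomial} says that $u$ is a polynomial, and the growth bound $|u| \le C(1+R)^d$ forces $\deg u \le d$. Writing $u = u_0 + u_1 + \cdots + u_d$ with $u_j \in A_j^n$ and using that $\Delta\Delta$ preserves the degree grading (it maps $A_j^n$ to $A_{j-4}^n$, landing in disjoint summands of $\mathbb{R}[x_1, \dots, x_n]$), the equation $\Delta\Delta u = 0$ forces $\Delta\Delta u_j = 0$ for each $j$; equivalently $u_j \in B_j^n$. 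Conversely, any element of $\bigoplus_{j=0}^d B_j^n$ is a biharmonic polynomial of degree $\le d$, hence automatically lies in $\mathcal{H}_{d,d-1}(\mathbb{R}^n)$ since polynomials of degree $\le d$ satisfy both growth bounds. This gives the direct-sum decomposition
\begin{equation*}
    \mathcal{H}_{d,d-1}(\mathbb{R}^n) = \bigoplus_{j = 0}^{d} B_j^n.
\end{equation*}

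\textbf{Telescoping.} Summing the formula for $\dim B_j^n$ and reindexing,
\begin{equation*}
    \dim \mathcal{H}_{d,d-1}(\mathbb{R}^n) = \sum_{j=0}^{d}\bigl(\dim A_j^n - \dim A_{j-4}^n\bigr) = \sum_{j=0}^{d}\dim A_j^n - \sum_{j=0}^{d-4}\dim A_j^n,
\end{equation*}
which collapses to $\dim A_d^n + \dim A_{d-1}^n + \dim A_{d-2}^n + \dim A_{d-3}^n$. The only point requiring any real care is the justification that a polynomial-growth biharmonic function on $\mathbb{R}^n$ is genuinely a polynomial of degree at most $d$; this is where Proposition \ref{polynomially bounded in Rn makes you a polynomial} is indispensable, and everything else is linear algebra in the graded ring of homogeneous polynomials.
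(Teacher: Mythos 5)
Your proof is correct and follows essentially the same route as the paper's: surjectivity by composing the onto map from Lemma \ref{colding minicozzi lemma} twice, $\dim B_d^n$ by rank–nullity, and the dimension formula by decomposing $\mathcal{H}_{d,d-1}(\mathbb{R}^n)$ as $\bigoplus_{j\le d} B_j^n$ and telescoping. You spell out a couple of steps the paper leaves implicit — notably invoking Proposition \ref{polynomially bounded in Rn makes you a polynomial} to justify that every element of $\mathcal{H}_{d,d-1}(\mathbb{R}^n)$ is a polynomial of degree at most $d$, and verifying the converse inclusion — which is a welcome bit of extra care but not a different argument.
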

\begin{proof}
    To show the map is onto we use the previous lemma. Fix \(p \in A_d^n\). Then there is \(p' \in A_{d + 2}^n\) such that \(\Delta p' = p\). Furthermore, there is \(p'' \in A_{d + 4}^n\) such that \(\Delta p'' = p'\). Thus \(\Delta\Delta p'' = p\).
    
    The fact that \(\dim B_d^n = \dim A_d^n - A_{d - 4}^n\) follows from the fact that \(\Delta\Delta: A_d^n \rightarrow A_{d - 4}^n\) is onto. For the last claim, we note that \(\mathcal{H}_{d, d-1}(\mathbb{R}^n)\) is the direct sum of the spaces \(\mathcal{H}_{j, j-1}(\mathbb{R}^n) \cap A_j^n = B_j^n\) for \(j \le d\). Summing \(\dim B_j^n = \dim A_j^n - A_{j - 4}^n\) over \(j\) gives the second claim.
    
\end{proof}
\subsection{Bicaloric Polynomials in \(\mathbb{R}^n\)}
Now we consider polynomially bounded solutions to the biharmonic heat equation in \(\mathbb{R}^n\).
\begin{proposition}\label{bicaloric functions in Rn are polynomials}
    Let \(u \in \mathcal{P}_{k, \ell}(\mathbb{R}^n)\). Then \(u\) is a polynomial in \(x_i\) and \(t\).
\end{proposition}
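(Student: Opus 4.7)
The plan is to leverage Lemma \ref{coefficientbound} together with the special feature of $\mathbb{R}^n$ captured by Proposition \ref{polynomially bounded in Rn makes you a polynomial}: every polynomially bounded biharmonic function on $\mathbb{R}^n$ is already a polynomial. Since Lemma \ref{coefficientbound} already shows $u$ is polynomial in $t$, the entire task reduces to showing that each spatial coefficient is a polynomial in $x$.

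First I would apply Lemma \ref{coefficientbound} to $u \in \mathcal{P}_{k, \ell}(\mathbb{R}^n)$ to write
\[
u(x, t) = p_0(x) + t\, p_1(x) + \cdots + t^d p_d(x),
\]
where $d = \min\{k, \ell + 1\}$, with $\Delta\Delta p_d = 0$ and $\Delta\Delta p_j = -(j+1)\, p_{j+1}$ for $j < d$, and with each $p_j$ and its gradient polynomially bounded in $x$. The remaining task is then to prove, by downward induction on $j$, that each $p_j$ is a polynomial in $x$.

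For the base case $j = d$, the coefficient $p_d$ is biharmonic on $\mathbb{R}^n$ and lies in $\mathcal{H}_{a, b}(\mathbb{R}^n)$ for some $a, b$, so Proposition \ref{polynomially bounded in Rn makes you a polynomial} gives immediately that $p_d$ is a polynomial. For the inductive step, assume $p_{j+1}$ is a polynomial. Using Lemma \ref{dim for biharmonic polynomials} one homogeneous degree at a time (together with the observation $\Delta\Delta(x_1^4 / 24) = 1$ to handle the constant piece in $A_0^n$, which lies just outside the range of degrees covered in that lemma), I can find a polynomial $q_j$ with $\Delta\Delta q_j = -(j+1)\, p_{j+1}$. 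Then $p_j - q_j$ is biharmonic, and since $p_j$, $q_j$, and their gradients are all polynomially bounded, so is $p_j - q_j$. Hence $p_j - q_j \in \mathcal{H}_{a', b'}(\mathbb{R}^n)$ for some $a', b'$, and Proposition \ref{polynomially bounded in Rn makes you a polynomial} yields that $p_j - q_j$ is a polynomial, so $p_j$ itself is a polynomial.

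Iterating the inductive step down to $j = 0$ shows every coefficient $p_j(x)$ is a polynomial, and therefore $u(x, t) = \sum_{j = 0}^{d} t^j p_j(x)$ is a polynomial in the $x_i$ and $t$. The main subtlety I anticipate is confirming surjectivity of $\Delta\Delta$ on the full polynomial ring on $\mathbb{R}^n$, rather than only on the homogeneous pieces of strictly positive degree that Lemma \ref{dim for biharmonic polynomials} treats; this is handled by decomposing the polynomial target $-(j+1)\, p_{j+1}$ into its homogeneous components and preimaging each separately, with the constant component covered by the explicit $x_1^4 / 24$ example.
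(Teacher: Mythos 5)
Your proof is correct, but it takes a genuinely different route from the paper's. The paper disposes of this proposition in one line, following the same template as Proposition~\ref{polynomially bounded in Rn makes you a polynomial}: since $\partial_{x_i}$, $\partial_t$, and $\Delta$ all commute in $\mathbb{R}^n$, every mixed derivative $\partial_{x_i}^a\partial_t^b u$ is again bicaloric, so iterating Lemma~\ref{reversepoincare} produces a power decay in $r$ that, combined with the polynomial growth of $u$ and $\nabla u$ and the Euclidean volume growth, forces every sufficiently high-order mixed derivative to vanish identically. You instead invoke the already-established time-polynomial decomposition from Lemma~\ref{coefficientbound}, $u=\sum_j t^j p_j(x)$ with $\Delta\Delta p_j=-(j+1)p_{j+1}$ and polynomial bounds on $p_j$, $\nabla p_j$, and then argue by downward induction that each $p_j$ is a polynomial in $x$ using Proposition~\ref{polynomially bounded in Rn makes you a polynomial} together with surjectivity of $\Delta\Delta$ on the polynomial ring. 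Your argument is sound: the base case is immediate since $p_d$ is biharmonic and polynomially bounded with polynomially bounded gradient; the inductive step correctly produces a polynomial preimage $q_j$ (degree by degree, with $\Delta\Delta(x_1^4/24)=1$ covering the constant piece not treated in Lemma~\ref{dim for biharmonic polynomials}), and $p_j-q_j$ inherits polynomially bounded growth and gradient growth, so Proposition~\ref{polynomially bounded in Rn makes you a polynomial} applies. Two small points worth flagging: Lemma~\ref{coefficientbound} is stated for $\mathcal{P}_{4k,4\ell}$, so you should first note the trivial inclusion $\mathcal{P}_{k,\ell}\subseteq\mathcal{P}_{4\lceil k/4\rceil, 4\lceil\ell/4\rceil}$ before applying it; and you should briefly justify that $\nabla q_j$ is polynomially bounded (true, since $q_j$ is a polynomial) so that $p_j-q_j$ genuinely lands in some $\mathcal{H}_{a',b'}(\mathbb{R}^n)$. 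Your route is more explicit about the structure of $u$ and recycles more of the machinery already built; the paper's route is shorter and runs uniformly in $x$ and $t$ without needing the coefficient decomposition.
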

\begin{proof}
    As before, this follows from the reverse Poincaré estimate and the fact that the operators \(\partial_{x_i}\), \(\partial_t\), and \(\Delta\) commute in \(\mathbb{R}^n\).
\end{proof}
\begin{corollary}
    If \(u \in \mathcal{P}_{k, \ell}(\mathbb{R}^n)\), then there is some \(d\) such that \(u \in \mathcal{P}_{d, d-1}(\mathbb{R}^n)\).
\end{corollary}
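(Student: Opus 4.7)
The plan is to mimic the argument for the harmonic analogue (Corollary \ref{fewer spaces in R^n}), but with the subtlety that on a heat ball $B_R \times [-R^4, 0]$ the natural notion of homogeneity is \emph{parabolic}: the monomial $x^\alpha t^j$ is bounded there by $R^{|\alpha| + 4j}$, so a spatial derivative of such a monomial has one less degree of growth in $R$, even though a time derivative would gain four. So I would start by applying Proposition \ref{bicaloric functions in Rn are polynomials} to write $u(x,t) = \sum_{\alpha, j} c_{\alpha, j}\, x^{\alpha} t^j$ as a finite polynomial, and then group its monomials by parabolic degree $m = |\alpha| + 4j$ to obtain a decomposition $u = \sum_m u_m$ with $u_m(\lambda x, \lambda^4 t) = \lambda^m u_m(x, t)$.

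The first real step is to show $u_m \equiv 0$ for $m > k$. I would fix $(x_0, t_0) \in B_1(0) \times [-1, 0]$ and view
\begin{equation*}
    F(\lambda) := u(\lambda x_0, \lambda^4 t_0) = \sum_m \lambda^m\, u_m(x_0, t_0)
\end{equation*}
as a polynomial in $\lambda > 0$. Since $(\lambda x_0, \lambda^4 t_0) \in B_{\lambda}(0) \times [-\lambda^4, 0]$, the hypothesis $u \in \mathcal{P}_{k,\ell}(\mathbb{R}^n)$ gives $|F(\lambda)| \le C(1 + \lambda)^k$, which forces the polynomial $F$ to have degree at most $k$. Hence $u_m(x_0, t_0) = 0$ for every $m > k$; since each $u_m$ is itself a polynomial and vanishes on an open set, it vanishes identically.

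With $u = \sum_{m=0}^{k} u_m$ established, the second step is routine: each $\nabla u_m$ is a sum of monomials of parabolic degree $m - 1$, so on the heat ball $B_R \times [-R^4, 0]$ it is bounded by a constant times $R^{m-1}$. Summing over $m \le k$ yields $|\nabla u(x,t)| \le C'(1 + R)^{k-1}$ on $B_R \times [-R^4, 0]$, so $u \in \mathcal{P}_{k, k-1}(\mathbb{R}^n)$ and we may take $d = k$.

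There is no real obstacle here beyond bookkeeping; the one point that requires care is that the correct scaling is parabolic rather than isotropic, so grouping by $|\alpha| + 4j$ (rather than by total degree $|\alpha| + j$) is essential for the argument $|F(\lambda)| \le C(1+\lambda)^k$ to kill exactly the right monomials.
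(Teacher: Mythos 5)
Your proof is correct and takes the same basic route as the paper: reduce to the polynomial case via Proposition \ref{bicaloric functions in Rn are polynomials}, then observe that a spatial gradient lowers the growth degree by one. The paper's proof of this corollary is a one-liner (``this follows the same way as before''), and your version usefully makes explicit the subtlety that the one-liner glosses over: on heat balls $B_R \times [-R^4, 0]$ the relevant notion of degree is the parabolic one, $m = |\alpha| + 4j$ (what the paper introduces a few lines later as biparabolic degree), not the total degree used in the purely biharmonic analogue. Your scaling argument via $F(\lambda) = u(\lambda x_0, \lambda^4 t_0)$ cleanly shows the decomposition $u = \sum_m u_m$ terminates at $m = k$, so one may take $d = k$; this sharper conclusion $d \le k$ (rather than merely ``some $d$'') is in fact what is needed when the corollary is invoked to justify $\mathcal{P}_{4d, 4d}(\mathbb{R}^n) = \mathcal{P}_{4d, 4d-1}(\mathbb{R}^n)$ in the proof of Corollary \ref{sharp in Rn}.
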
\label{fewer caloric spaces in R^n}
\begin{proof}
    This follows the same way as before.
\end{proof}
Given a monomial in \(x_i\) and \(t\), we define its biparabolic degree as follows: for \(t^{n_0}\prod x_i^{n_i}\), the biparabolic degree is \(4n_0 + \sum n_i\). The degree of a polynomial is then the maximal degree of the monomials summing to it. Let \(\mathcal{A}_j^n\) be the set of homogeneous polynomials in \(\mathbb{R}^n\) of biparabolic degree \(j\). We have
\begin{equation}
    \mathcal{A}_d^n = A_d^n \oplus t A_{d - 4}^n \oplus t^2A_{d - 8}^n \oplus \cdots
\end{equation}
\begin{lemma}\label{dim for bicaloric polynomials}
    For \(d > 0\) we have \(\dim(\mathcal{P}_{d, d-1}(\mathbb{R}^n) \cap \mathcal{A}_d^n) = \dim A_d^n\) and
    \begin{equation}
        \dim \mathcal{P}_{d, d-1}(\mathbb{R}^n) = \sum_{j = 0}^d\dim A_j^n.
    \end{equation}
\end{lemma}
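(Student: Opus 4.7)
The plan is to decompose $\mathcal{P}_{d,d-1}(\mathbb{R}^n)$ by biparabolic-homogeneous degree, reducing the dimension count to a purely spatial computation. For the first claim, any element $u$ of $\mathcal{A}_d^n$ decomposes as $u = \sum_{j \ge 0} t^j p_j$ with $p_j \in A_{d - 4j}^n$, and the calculation already performed in the proof of Lemma \ref{coefficientbound} shows that the bicaloric condition is equivalent to the recursion $\Delta\Delta p_{j - 1} = -j p_j$. Iterating gives $p_j = \frac{(-1)^j}{j!}(\Delta\Delta)^j p_0$, so such a $u$ is determined by $p_0 \in A_d^n$; conversely, for any $p_0 \in A_d^n$ this formula produces a bicaloric biparabolic-homogeneous polynomial of degree $d$, and its biparabolic homogeneity automatically yields the growth bounds $|u| \le CR^d$ and $|\nabla u| \le CR^{d - 1}$ on $B_R \times [-R^4, 0]$. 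Thus $p_0 \mapsto u$ is a linear isomorphism $A_d^n \to \mathcal{P}_{d, d-1}(\mathbb{R}^n) \cap \mathcal{A}_d^n$.

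For the second claim, I would take an arbitrary $u \in \mathcal{P}_{d, d-1}(\mathbb{R}^n)$, which by Proposition \ref{bicaloric functions in Rn are polynomials} is a polynomial in $x_1, \ldots, x_n, t$, and split it into biparabolic-homogeneous components $u = \sum_{j = 0}^{D} u_j$ with $u_j \in \mathcal{A}_j^n$. Because both $\partial_t$ and $\Delta\Delta$ decrease biparabolic degree by exactly $4$, the operator $\partial_t + \Delta\Delta$ sends $\mathcal{A}_j^n$ into $\mathcal{A}_{j - 4}^n$, so the equation $\partial_t u + \Delta\Delta u = 0$ splits across distinct biparabolic grades into the system $(\partial_t + \Delta\Delta) u_j = 0$. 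Each $u_j$ is therefore a bicaloric polynomial in $\mathcal{A}_j^n$, and in particular lies in $\mathcal{P}_{j, j-1}(\mathbb{R}^n)$ by the growth estimate from the first paragraph.

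The main subtlety, and the step that requires care, is that the growth of $u$ forces $D \le d$ in this decomposition. I would handle this by a scaling argument: biparabolic homogeneity gives $u(Rx, R^4 t) = \sum_{j = 0}^{D} R^j u_j(x, t)$, and restricting $(x, t)$ to $B_1 \times [-1, 0]$ bounds the left side by $C(1 + R)^d$. Dividing by $R^D$ and letting $R \to \infty$ would force $u_D \equiv 0$ on $B_1 \times [-1, 0]$, and hence everywhere as a polynomial, contradicting the maximality of $D$ whenever $D > d$. With $D \le d$ secured, $\mathcal{P}_{d, d-1}(\mathbb{R}^n) = \bigoplus_{j = 0}^{d} (\mathcal{P}_{d, d-1}(\mathbb{R}^n) \cap \mathcal{A}_j^n)$, and summing the dimensions from the first claim gives the desired identity $\dim \mathcal{P}_{d, d-1}(\mathbb{R}^n) = \sum_{j = 0}^{d} \dim A_j^n$.
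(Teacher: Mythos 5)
Your proof is correct, and the overall strategy — decompose $\mathcal{P}_{d,d-1}(\mathbb{R}^n)$ by biparabolic grade, compute the kernel dimension on each grade $\mathcal{A}_j^n$, then sum — is the same as the paper's. Where you differ is in how the per-grade dimension is obtained. The paper shows that $\partial_t + \Delta\Delta : \mathcal{A}_d^n \to \mathcal{A}_{d-4}^n$ is \emph{onto} by exhibiting an explicit right inverse (the series $tu - \tfrac{1}{2}t^2(\partial_t + \Delta\Delta)u + \cdots$), and then applies rank--nullity to get $\dim \ker = \dim\mathcal{A}_d^n - \dim\mathcal{A}_{d-4}^n = \dim A_d^n$. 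You instead parametrize the kernel directly: the recursion $\Delta\Delta p_{j-1} = -jp_j$ forces $p_j = \tfrac{(-1)^j}{j!}(\Delta\Delta)^j p_0$, giving an explicit linear isomorphism $A_d^n \to \mathcal{P}_{d,d-1} \cap \mathcal{A}_d^n$. These are dual computations of the same number; yours has the minor bonus of producing a closed formula for the biparabolic-homogeneous bicaloric polynomials. You also spell out, via the scaling argument $u(Rx, R^4t) = \sum_j R^j u_j(x,t)$, that the top biparabolic grade $D$ of a polynomial in $\mathcal{P}_{d,d-1}$ cannot exceed $d$ — a point the paper leaves implicit when it says ``Summing gives the second claim.'' Filling in that step is sound and makes the argument self-contained.
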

\begin{proof}
    Both \(\partial_t\) and \(\Delta\Delta\) map \(\mathcal{A}_d^n\) to \(\mathcal{A}_{d-4}^n\). We note that for \(u \in \mathcal{A}_{d-4}^n\) we have
    \begin{equation}
        (\partial_t + \Delta\Delta)\left(tu - \frac{1}{2}t^2(\partial_t + \Delta\Delta)u + \frac{1}{6}t^3(\partial_t + \Delta\Delta)^2 u - \cdots\right) = u,
    \end{equation}
    so that \(\partial_t + \Delta\Delta\) is surjective. Thus,
    \begin{equation}
        \dim (\mathcal{P}_{d, d-1}(\mathbb{R}^n) \cap \mathcal{A}_d^n) = \dim \mathcal{A}_d^n - \dim\mathcal{A}_{d-4}^n = \dim A_d^n
    \end{equation}
    since \(\mathcal{P}_{d, d-1}(\mathbb{R}^n) \cap \mathcal{A}_d^n\) is the kernel of \(\partial_t + \Delta\Delta\) restricted to \(\mathcal{A}_d^n\). Summing gives the second claim.
    
\end{proof}
Now finally we show that the estimate in Theorem \ref{dimension bound theorem} is sharp in \(\mathbb{R}^n\).
\begin{corollary}\label{sharp in Rn}
    For positive integers \(d > 0\)
    \begin{equation}
        \dim \mathcal{P}_{4d, 4d - 1}(\mathbb{R}^n) = \sum_{i = 0}^{d} \dim \mathcal{H}_{4(d - i), 4(d - i) - 1}(\mathbb{R}^n).
    \end{equation}
\end{corollary}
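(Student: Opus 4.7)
The plan is to directly compute both sides using the explicit formulas already established for the dimensions of the biharmonic and bicaloric polynomial spaces in $\mathbb{R}^n$. By Lemma \ref{dim for bicaloric polynomials}, the left-hand side equals $\sum_{j=0}^{4d} \dim A_j^n$. By Lemma \ref{dim for biharmonic polynomials}, each term on the right-hand side expands as
\[
\dim \mathcal{H}_{4(d-i), 4(d-i)-1}(\mathbb{R}^n) \;=\; \sum_{j=0}^{3} \dim A_{4(d-i)-j}^n,
\]
where I would adopt the standard convention that $\dim A_m^n = 0$ for $m < 0$, so that the terminal slot $i = d$ correctly records only the constants (consistent with how the main dimension bound treats the boundary case).

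The corollary then reduces to verifying the index identity
\[
\sum_{i=0}^{d} \sum_{j=0}^{3} \dim A_{4(d-i)-j}^n \;=\; \sum_{m=0}^{4d} \dim A_m^n.
\]
To see this, as $(i, j)$ ranges over $\{0, 1, \ldots, d\} \times \{0, 1, 2, 3\}$, the expression $m = 4(d-i) - j$ takes each integer value in $\{-3, -2, \ldots, 4d-1, 4d\}$ exactly once. Discarding the four negative values (which contribute zero under our convention) leaves precisely the sum on the right.

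The argument is essentially arithmetic bookkeeping, and I do not expect any real obstacle: all the analytic content sits in Lemmas \ref{dim for biharmonic polynomials} and \ref{dim for bicaloric polynomials}, where surjectivity of $\Delta\Delta$ on $A_{d+4}^n$ and of $\partial_t + \Delta\Delta$ on $\mathcal{A}_d^n$ yields the dimension counts. The content of this corollary is just the observation that partitioning the biparabolic degrees $\{0, 1, \ldots, 4d\}$ into consecutive blocks of four matches the stratification of $\mathcal{P}_{4d, 4d-1}(\mathbb{R}^n)$ by the kernels $\mathcal{K}_i$ arising in the proof of Theorem \ref{dimension bound theorem}, and that in $\mathbb{R}^n$ the inequalities in that stratification are all equalities because $\Delta\Delta$ is surjective on the corresponding polynomial spaces.
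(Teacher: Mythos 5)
Your proof is correct and follows essentially the same route as the paper: apply Lemma~\ref{dim for bicaloric polynomials} to expand the left side as $\sum_{j=0}^{4d}\dim A_j^n$, apply Lemma~\ref{dim for biharmonic polynomials} to each term on the right, and observe that the blocks of four indices $4(d-i)-j$ tile $\{0,\dots,4d\}$ (with negative indices contributing zero). Your explicit treatment of the convention $\dim A_m^n=0$ for $m<0$ and the boundary slot $i=d$ is a welcome bit of care that the paper leaves implicit.
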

\begin{proof}
    From lemmas \ref{dim for bicaloric polynomials} and \ref{dim for biharmonic polynomials} we have
    \begin{equation}
        \begin{split}
            \dim \mathcal{P}_{4d, 4d-1}(\mathbb{R}^n) & = \sum_{j = 0}^{4d}\dim A_j^n = \sum_{j = 0}^d(\dim A_{4j}^n + \dim A_{4j - 1}^n + \dim A_{4j - 2}^n + \dim A_{4j - 3}^n) \\ & = \sum_{j = 0}^d\dim \mathcal{H}_{4d, 4d - 1}(\mathbb{R}^n).
        \end{split}
    \end{equation}
    Setting \(k = \ell = d\) in Theorem \ref{dimension bound theorem} and noting \(\mathcal{P}_{4d, 4d}(\mathbb{R}^n) = \mathcal{P}_{4d, 4d-1}(\mathbb{R}^n)\) and \(\mathcal{H}_{4d, 4d}(\mathbb{R}^n) = \mathcal{H}_{4d, 4d-1}(\mathbb{R}^n)\) shows that the estimate in Theorem \ref{dimension bound theorem} is sharp.
    
\end{proof}
\subsubsection*{Acknowledgements}
I would like to thank my advisor William Minicozzi for introducing this problem to me and for his support along the way.

\newpage
\printbibliography

\end{document}